\def\co{\colon\thinspace}
\DeclareMathAlphabet{\mathsfsl}{OT1}{cmss}{m}{sl}
\newcommand{\tensor}[1]{\mathsfsl{#1}}
\newtheorem{thm}{Theorem}[section]
\newtheorem{lem}[thm]{Lemma}
\newtheorem{cor}[thm]{Corollary}
\newtheorem{prop}[thm]{Proposition}
\newtheorem*{thm*}{Theorem}
\theoremstyle{definition}
\newtheorem{defn}[thm]{Definition}
\newtheorem{rem}[thm]{Remark}
\newtheorem{construction}[thm]{Construction}
\begin{document}

\title{Heegaard Floer homology and fibred $3$--manifolds}

\author{{Yi NI}\\{\normalsize Department of Mathematics, MIT, Room 2-306}\\
{\normalsize 77 Massachusetts Avenue, Cambridge, MA
02139-4307}\\{\small\it Emai\/l\/:\quad\rm yni@math.mit.edu}}

\date{}
\maketitle

\begin{abstract}
Given a closed $3$--manifold $Y$, we show that the Heegaard Floer
homology determines whether $Y$ fibres over the circle with a
fibre of negative Euler characteristic. This is an analogue of an
earlier result about knots proved by Ghiggini and the author.
\end{abstract}

{\it\hfill Dedicated to the memory of Xiao-Song Lin}

\section{Introduction}

Heegaard Floer homology was introduced by Ozsv\'ath and Szab\'o in
\cite{OSzAnn1}. This theory contains a package of invariants for
closed 3--manifolds. A filtered version of these invariants,
called knot Floer homology, was defined by Ozsv\'ath--Szab\'o
\cite{OSzKnot} and Rasmussen \cite{Ra2}
 for null-homologous knots. This theory turns out to be very powerful. For
example, it detects the Thurston norm \cite{OSzGenus}, and a
result due to Ghiggini \cite{Gh} and the author \cite{Ni2} states
that knot Floer homology detects fibred knots.

In fact, given a compact manifold with boundary, the information
from knot Floer homology tells you whether this manifold is
fibred. (See the proof of \cite[Corollary 1.2]{Ni2}.) Now it is
natural to ask whether a similar result can be proved for closed
manifolds. In the current paper, we will answer this question
affirmatively. Our main theorem is:

\begin{thm}\label{ClosedFibre}
Suppose $Y$ is a closed irreducible $3$--manifold, $F\subset Y$ is
a closed connected surface of genus $g\ge2$.  Let
$HF^+(Y,[F],g-1)$ denote the group
$$\bigoplus_{\mathfrak s\in \mathrm{Spin}^c(Y),\;\langle c_1(\mathfrak s),[F]\rangle=2g-2}HF^+(Y,\mathfrak s).$$
If $HF^+(Y,[F],g-1)\cong\mathbb Z$, then $Y$ fibres over the
circle with $F$ as a fibre.
\end{thm}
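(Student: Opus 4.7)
The plan is to reduce the closed-manifold statement to the fibred-knot detection theorem of Ghiggini and the author (\cite{Ni2}) by cutting $Y$ along $F$ and repackaging the resulting sutured piece as a knot complement. First, cut $Y$ open along $F$ to obtain a compact $3$--manifold $N$ with $\partial N = F_{+} \sqcup F_{-}$, two copies of $F$. Since $F$ is connected, $Y$ fibres over $S^1$ with $F$ as a fibre if and only if $N \cong F \times [0,1]$. So the goal becomes: show that, under the hypothesis $HF^{+}(Y,[F],g-1) \cong \mathbb Z$, the manifold $N$ is a product.

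The key step is to identify $HF^{+}(Y,[F],g-1)$ with a knot-Floer invariant attached to an auxiliary closed manifold. Concretely, I would construct a closed $3$--manifold $Y^{*}$ containing a null-homologous knot $K$ with a genus-$g$ Seifert surface $\widehat F$ built from $F$ (for instance by choosing a properly embedded arc in $N$ joining $F_{+}$ to $F_{-}$, attaching a $1$--handle to $N$ along this arc with core promoted to a knot, and closing up by a suitable gluing of the boundary). The construction should be arranged so that (i) the knot exterior $Y^{*} \setminus \nu(K)$ fibres over $S^{1}$ with $\widehat F$ as fibre if and only if $N$ is a product $F \times [0,1]$; and (ii) there is an identification $HF^{+}(Y,[F],g-1) \cong \widehat{HFK}(Y^{*},K,g)$ of the top Alexander summands, coming from a direct comparison of adapted Heegaard diagrams or from a surgery exact triangle relating the two manifolds. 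The intuition behind (ii) is that both sides essentially compute the same sutured Floer invariant of $N$.

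Given such a construction, the hypothesis transports to $\widehat{HFK}(Y^{*},K,g) \cong \mathbb Z$. The fibred-knot theorem of \cite{Ni2} --- or rather the refinement to manifolds with boundary mentioned in the introduction via \cite[Corollary 1.2]{Ni2} --- then asserts that $K$ is a fibred knot with fibre $\widehat F$, so that $Y^{*} \setminus \nu(K)$ fibres over $S^{1}$ with fibre $\widehat F$. By the design of the construction this fibration restricts to the product structure $N \cong F \times [0,1]$, and the conclusion that $Y$ fibres over $S^{1}$ with fibre $F$ follows.

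The main obstacle will be the second step: producing the auxiliary pair $(Y^{*},K)$ so that both the geometric equivalence (knot complement fibres $\iff$ $N$ is a product) and the Floer-theoretic equivalence (matching of top summands) hold simultaneously, and so that $\widehat F$ actually realises the Seifert genus of $K$. The latter minimality is needed to apply the fibred knot theorem and is not automatic from the construction; it should follow from the Ozsv\'ath--Szab\'o Thurston norm detection theorem \cite{OSzGenus} combined with the fact that $[F]$ is norm-minimising in $Y$ (an adjunction consequence of $HF^{+}(Y,[F],g-1)\ne 0$). Irreducibility of $Y$ and the hypothesis $g \ge 2$ enter here to rule out sphere and torus summands that would obstruct both the geometric and algebraic reductions.
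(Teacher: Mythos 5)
Your endgame---producing a knot $K$ in an auxiliary manifold whose $0$--surgery recovers (a cut-and-reglued copy of) $Y$, and invoking the fibred-knot theorem of \cite{Ni2}---is indeed where the paper's proof finishes, but the step you yourself flag as ``the main obstacle'' is the entire content of the theorem, and your outline supplies no mechanism for it. The identification (ii), $HF^{+}(Y,[F],g-1)\cong\widehat{HFK}(Y^{*},K,g)$, cannot be obtained ``from a direct comparison of adapted Heegaard diagrams or from a surgery exact triangle'' for an arbitrary auxiliary pair: the relevant exact triangle relating the top knot Floer group of $(Y^{*},K)$ to $HF^{+}$ of the $0$--surgery has a third term, $HF^{+}(Y^{*})$ in the appropriate $\mathrm{Spin}^c$ structures, which has no reason to vanish. (The introduction of the paper says explicitly that an analogue of \cite[Theorem 5.1]{Ni2} for closed manifolds---exactly the kind of direct identification you are positing---is what the author could not prove.) The paper's way around this is the twisted-coefficient mechanism: one twists by a curve $\omega$ with coefficients in the Novikov ring $\mathcal L$, arranges the auxiliary manifold $Z$ to contain a non-separating $2$--sphere $S$ with $\omega\cdot S\neq0$ so that $\underline{HF}^{+}(Z,\omega;\mathcal L)=0$ (Lemma~\ref{Lem:TrivialS2}), whence the exact triangles collapse to the isomorphism $\underline{\widehat{HFK}}(Z,K,\omega,[\check F],g;\mathcal L)\cong\underline{HF}^{+}(Y_1,\omega,[F],g-1;\mathcal L)$ (Lemma~\ref{Lem:0surgery}), and then applies a twisted version of the fibred-knot theorem (Theorem~\ref{TwistKnotFibre}). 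Your genus-minimality worry (handled by norm detection) is not where the difficulty lies.

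Moreover, arranging such a $Z$ at all is the hard geometric work that your proposal omits entirely. The surgery curve and the non-separating spheres come from a product region $G\times S^{1}$, $G\subset F$ a once-punctured torus, inside a manifold $Y_1$ obtained from $Y$ by cutting and regluing along parallel copies of $F$; producing this region requires (a) the fact that $Y$ cut along $F$ is a homology product, proved via Turaev torsion and $\chi(HF^{+})$ (Proposition~\ref{HomoProd}), and (b) the taut-foliation/contact-invariant dichotomy (Theorem~\ref{rank>1}, Corollary~\ref{CharSurj}, Lemma~\ref{PuncTori}): if the product annuli in the pieces $M_k$ did not span $H_1(M_k)$, one builds two taut foliations \`a la Gabai whose Ozsv\'ath--Szab\'o contact invariants force $\mathrm{rank}\,HF^{+}(Y,[F],g-1)>1$, contradicting the hypothesis. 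One also needs the homology product property to pass to a manifold $Y_2$ with $b_1=1$ so that the twisting untwists and the twisted group is actually $\mathcal L$. By contrast, your proposed construction (a $1$--handle attached to $N$ along an arc, then ``closing up by a suitable gluing'') is not specified in a way that makes $K$ null-homologous with a genus-$g$ Seifert surface, makes $Y$ its $0$--surgery, or produces the spheres needed to kill the ambient Floer homology; without those features neither the geometric equivalence (i) nor the Floer identification (ii) is available, so the reduction to \cite{Ni2} does not get off the ground.
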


The converse of this theorem is already known, see \cite[Theorem
5.2]{OSzSympl}.

\begin{rem}
When $g=1$, there is no chance for $HF^+(Y,[F],0)$ to be $\mathbb
Z$; some arguments in this paper also break down. Nevertheless, it
is reasonable to expect that Heegaard Floer homology still detects
fibrations in the case of $g=1$. As suggested by Ozsv\'ath and
Szab\'o, in this case one may use some variants of Heegaard Floer
homology, for instance, Heegaard Floer homology with twisted
coefficients \cite[Section 8]{OSzAnn2} or with coefficients in the
Novikov ring \cite[11.0.1]{OSzAnn1}, \cite{JM}.
\end{rem}

This paper is closely related to the previous works of Ghiggini
\cite{Gh} and of the author \cite{Ni2}. We briefly recall the main
ingredients in the proof of the case of knots from those two
papers. There are three main ingredients:

\noindent(I)\quad The construction of two different taut
foliations under certain conditions. This part is contained in
\cite[Section 6]{Ni2} and the proof uses an argument due to Gabai
\cite{G3}. This part was also obtained by Ian Agol by a different
method, and the genus 1 case was proved in \cite{Gh}.

\noindent(II)\quad The existence of a taut foliation implies the
nontriviality of the corresponding Ozsv\'ath--Szab\'o contact
invariant. This part was almost proved by Ozsv\'ath and Szab\'o
\cite{OSzGenus}. But there are some technical restrictions in the
cases established by Ozsv\'ath and Szab\'o. For example, one needs
the condition $b_1(Y)=1$ or the use of twisted coefficients. The
case that is used in \cite{Gh,Ni2} was proved by Ghiggini in
\cite{Gh}. Basically, he proved a result in contact topology,
which, combined with some results of Ozsv\'ath and Szab\'o,
implies the desired nontriviality theorem.

\noindent(III)\quad Two decomposition formulas for knot Floer
homology, one in the case of horizontal decomposition
\cite[Theorem 4.1]{Ni2}, the other in the case of decomposition
along a separating product annulus \cite[Theorem 5.1]{Ni2}. The
second formula is essential and more technical. The proof of this
part uses the techniques introduced in \cite{Ni1}.

Now if (I) can be done, then (II) implies that the two distinct
taut foliations give rise to two linearly independent contact
invariants in the topmost term of the Heegaard Floer homology, so
the Floer homology is not monic. If (I) cannot be done, then the
topology of the knot complement is very restricted. One can then
use (III) to reduce the general case to the known case.

As we will find in this paper, the above ingredients (I) and (II)
can be applied to the case of closed manifolds without essential
changes. However, we are not able to prove an analogue of
\cite[Theorem~5.1]{Ni2} for closed manifolds. Instead, we will
construct a knot $K$ in a new manifold $Z$, and show that the pair
$(Z,K)$ has monic knot Floer homology by using twisted
coefficients and a simple argument in homological algebra. Then we
can apply \cite[Theorem 1.1]{Ni2} to get our conclusion.

\begin{rem}
Juh\'asz \cite{Ju} proved a very general decomposition formula for
the Floer homology of balanced sutured manifolds, based on the
techniques introduced by Sarkar and Wang \cite{SW}. In the case of
knots, the above ingredients (II) and (III) can be deduced from
this formula. This approach avoids the use of contact and
symplectic topology, but it is not clear to the author how to use
it to study closed manifolds.
\end{rem}

\noindent{\bf Acknowledgements.} The author wishes to thank Paolo
Ghiggini, whose work \cite{Gh} has a great influence on this one.
He also wishes to thank David Gabai, Feng Luo, Peter Ozsv\'ath,
Jacob Rasmussen and Zolt\'an Szab\'o for their encouragements
during the course of this work. Special thanks are due to Yinghua
Ai for a conversation during which the author realized a gap in an
earlier version of this paper, and for helpful discussions on
twisted Heegaard Floer homology. He is also grateful to the
referee for pointing out a mistake and for providing many
constructive suggestions.

The earlier versions of this paper were written when the author
was at Princeton University and Columbia University. The author
was partially supported by a Graduate School Centennial Fellowship
at Princeton University, an AIM Five-Year Fellowship and an NSF
grant (No. DMS-0805807) during the course of this work.

\section{Twisted Heegaard Floer homology}

For technical reason, at some point of the proof of our main
theorem we will use twisted Heegaard Floer homology with
coefficients in a Novikov ring. In this section, we will collect
some basic materials on this version of twisted Heegaard Floer
homology. More details about twisted Heegaard Floer homology can
be found in \cite{OSzAnn2,JM,AiP}.

\subsection{Twisted chain complexes}

Let $Y$ be a closed, oriented $3$--manifold.
$(\Sigma,\mbox{\boldmath$\alpha$},\mbox{\boldmath$\beta$},z)$ is a
Heegaard diagram for $Y$. We always assume the diagram satisfies a
certain admissibility condition so that the Heegaard Floer
invariants we are considering are well-defined (see \cite{OSzAnn1}
for more details).

Let
$$\mathcal L=\mathbb Q[T^{-1},T]]=\left\{\sum_{i=n}^{+\infty}a_iT^i\bigg|\:a_i\in\mathbb Q,\;n\in\mathbb Z\right\}$$
be a Novikov ring, which is actually a field.

Let $\omega\subset Y$ be an immersed, possibly disconnected,
closed, oriented curve. One can homotope $\omega$ to be a curve
$\omega'\subset\Sigma$, such that $\omega'$ is in general position
with the $\alpha$-- and $\beta$--curves, namely, $\omega'$ is
transverse to these curves, and $\omega'$ does not contain any
intersection point of $\alpha$-- and $\beta$--curves.

Let $\underline{CF}^{\infty}(Y,\omega;\mathcal L)$ be the
$\mathcal L$--module freely generated by $[\mathbf x,i]$, where
$\mathbf x\in\mathbb T_{\alpha}\cap\mathbb T_{\beta}$,
$i\in\mathbb Z$. If $\phi$ is a topological Whitney disk
connecting $\mathbf x$ to $\mathbf y$, let
$\partial_{\alpha}\phi=(\partial\phi)\cap\mathbb T_{\alpha}$. We
can also regard $\partial_{\alpha}\phi$ as a multi-arc that lies
on $\Sigma$ and connects $\mathbf x$ to $\mathbf y$. We define
$$A(\phi)=(\partial_{\alpha}\phi)\cdot\omega'.$$
Geometrically, if two Whitney disks $\phi_1,\phi_2$ differ by a
periodic domain $\mathcal P$, then
$$A(\phi_1)-A(\phi_2)=H(\mathcal P)\cdot\omega,$$
where $H(\mathcal P)\in H_2(Y)$ is the homology class represented
by $\mathcal P$.

 Let
$$\underline{\partial}\co \underline{CF}^{\infty}(Y,\omega;\mathcal L)\to
\underline{CF}^{\infty}(Y,\omega;\mathcal L)$$ be the boundary map
defined by
$$\underline{\partial}\:[\mathbf x,i]=\sum_{\mathbf y}\sum_{\stackrel{\scriptstyle\phi\in\pi_2(\mathbf x,\mathbf y)}
{\mu(\phi)=1}}\#\big(\mathcal M(\phi)/\mathbb R\big)
T^{A(\phi)}[\mathbf y,i-n_z(\phi)].$$ The chain homotopy type of
the chain complex
$$\big(\underline{CF}^{\infty}(Y,\omega;\mathcal
L),\underline{\partial}\big)$$ only depends on the homology class
$[\omega]\in H_1(Y)$. When $\omega$ is null-homologous in $Y$, the
coefficients are not ``twisted" at all, namely,
$$\underline{CF}^{\infty}(Y,\omega;\mathcal L)\cong CF^{\infty}(Y;\mathbb Q)\otimes_{\mathbb Q}\mathcal L.$$

The standard construction in Heegaard Floer homology
\cite{OSzAnn1} allows us to define the chain complexes
$\underline{\widehat{CF}}(Y,\omega;\mathcal L)$ and
$\underline{CF}^{\pm}(Y,\omega;\mathcal L)$. When $K$ is a
null-homologous knot in $Y$ and $\omega\subset Y-K$, we can define
the twisted knot Floer complex
$\underline{\widehat{CFK}}(Y,K,\omega;\mathcal L)$. The homologies
of the chain complexes are called twisted Heegaard Floer
homologies.

\subsection{Twisted chain maps}

Let
$(\Sigma,\mbox{\boldmath$\alpha$},\mbox{\boldmath$\beta$},\mbox{\boldmath$\gamma$},z)$
be a Heegaard triple-diagram. Let $\omega'\subset\Sigma$ be a
closed immersed curve which is in general position with the
$\alpha$--, $\beta$-- and $\gamma$--curves.

The pants construction in \cite[Subsection 8.1]{OSzAnn1} gives
rise to a four-manifold $X_{\alpha,\beta,\gamma}$ with
$$\partial
X_{\alpha,\beta,\gamma}=-Y_{\alpha,\beta}-Y_{\beta,\gamma}+Y_{\alpha,\gamma}\:.$$
By this construction $X_{\alpha,\beta,\gamma}$ contains a region
$\Sigma\times \triangle$, where $\triangle$ is a two-simplex with
edges $e_{\alpha},e_{\beta},e_{\gamma}$. Let $\omega'\times
[0,1]=\omega'\times e_{\alpha}\subset X_{\alpha,\beta,\gamma}$ be
the natural properly immersed annulus such that
$$\omega'\times\{0\}\subset
Y_{\alpha,\beta}\:,\quad\omega'\times\{1\}\subset
Y_{\alpha,\gamma}\:.$$

Suppose $\mathbf x\in\mathbb T_{\alpha}\cap\mathbb T_{\beta},
\mathbf y\in\mathbb T_{\beta}\cap\mathbb T_{\gamma},\mathbf
w\in\mathbb T_{\alpha}\cap\mathbb T_{\gamma}$, $\psi$ is a
topological Whitney triangle connecting them. Let
$\partial_{\alpha}\psi=\partial\psi\cap\mathbb T_{\alpha}$ be the
arc connecting $\mathbf x$ to $\mathbf w$. We can regard
$\partial_{\alpha}\psi$ as a multi-arc on $\Sigma$. Define
$$A_3(\psi)=(\partial_{\alpha}\psi)\cdot\omega'.$$

Let the chain map
$$\underline{f^{\infty}_{\alpha,\beta,\gamma,\:\omega'\times I}}\co \underline{CF}^{\infty}(Y_{\alpha,\beta},\omega'\times\{0\};\mathcal L)
\otimes_{\mathbb Q} CF^{\infty}(Y_{\beta,\gamma};\mathbb Q)\to
\underline{CF}^{\infty}(Y_{\alpha,\gamma},\omega'\times\{1\};\mathcal
L)$$ be defined by the formula:
$$\underline{f^{\infty}_{\alpha,\beta,\gamma,\:\omega'\times I}}([\mathbf x,i]\otimes[\mathbf y,j])=
\sum_{\mathbf w}\sum_{\stackrel{\scriptstyle\psi\in\pi_2(\mathbf
x,\mathbf y,\mathbf w )}{\mu(\psi)=0}}\#\mathcal
M(\psi)T^{A_3(\psi)}[\mathbf w,i+j-n_z(\psi)].$$

The standard constructions \cite{OSzAnn1,OSzAnn2} allow us to
define chain maps introduced by cobordisms. We also have the
surgery exact triangles. For example, suppose $K\subset Y$ is a
knot with frame $\lambda$, $\omega\subset Y-K$ is a closed curve,
then $\omega$ also lies in the manifolds $Y_{\lambda}$ and
$Y_{\lambda+\mu}$ obtained by surgeries. The $2$--handle addition
cobordism $W$ from $Y$ to $Y_{\lambda}$ naturally contains a
properly immersed annulus $\omega\times I$. We can define a chain
map induced by $W$:
$$\underline{f^{\infty}_{W,\:\omega\times I}}\co\underline{CF}^{\infty}(Y,\omega;\mathcal L)
\to\underline{CF}^{\infty}(Y_{\lambda},\omega;\mathcal L).$$
Similarly, there are two other chain maps induced by the
cobordisms $Y_{\lambda}\to Y_{\lambda+\mu}$ and
$Y_{\lambda+\mu}\to Y$. We then have the long exact sequence
\cite{AiP}:
\begin{equation}\label{Eq:TwistExact}
\begin{CD}
\cdots\to\underline{HF}^+(Y,\omega;\mathcal L)\to
\underline{HF}^+(Y_{\lambda},\omega;\mathcal L)\to
\underline{HF}^+(Y_{\lambda+\mu},\omega;\mathcal L)\to\cdots.
\end{CD}
\end{equation}

\subsection{Some properties of twisted Heegaard Floer homology}

Many properties of the untwisted Heegaard Floer homology have
analogues in the twisted case. For example, the connected sum
formula for $\underline{\widehat{HF}}$ is the following:
$$\underline{\widehat{HF}}(Y_1,\omega_1;\mathcal L)\otimes\underline{\widehat{HF}}(Y_2,\omega_2;\mathcal L)
\cong\underline{\widehat{HF}}(Y_1\#Y_2,\omega_1\cup\omega_2;\mathcal
L).$$

\begin{figure}
\begin{picture}(335,120)
\put(100,0){\scalebox{0.50}{\includegraphics*[130pt,350pt][470pt,
590pt]{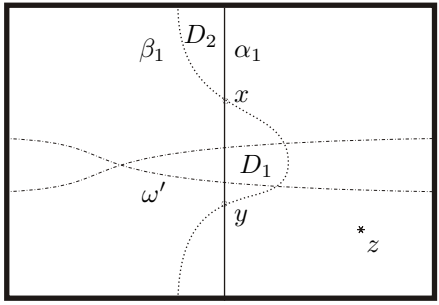}}}

\put(190,97){$\alpha_1$}

\put(154,97){$\beta_1$}

\put(190,80){$x$}

\put(190,35){$y$}

\put(240,23){$z$}

\put(192,53){$D_1$}

\put(171,102){$D_2$}

\put(155,42){$\omega'$}

\end{picture}
\caption{A Heegaard diagram for $S^2\times S^1$. The Heegaard
surface is the torus obtained by gluing the opposite sides of the
rectangle.}\label{HTorus}
\end{figure}

\begin{lem}\label{Lem:TrivialS2}
Suppose $Y$ contains a non-separating two-sphere $S$, $\omega\in
Y$ is a closed curve such that $\omega\cdot S\ne0$. We then have
$$\underline{\widehat{HF}}(Y,\omega;\mathcal L)=0,\quad\underline{HF}^+(Y,\omega;\mathcal L)=0.$$
\end{lem}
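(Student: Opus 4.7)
The plan is to use the connected-sum formula stated above to reduce to $Y=S^2\times S^1$, and then read both vanishings off the Heegaard diagram in Figure~\ref{HTorus}.

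Since $S$ is non-separating, splitting $Y$ along $S$ and capping off with balls gives $Y\cong Y'\#(S^2\times S^1)$, with $S$ becoming $S^2\times\{\mathrm{pt}\}$. After an isotopy, $\omega$ decomposes as a disjoint union $\omega=\omega_1\sqcup\omega_2$ with $\omega_i$ in the respective summand, and the hypothesis $\omega\cdot S\ne 0$ forces $[\omega_2]\ne 0$ in $H_1(S^2\times S^1)$. The connected-sum formula for $\underline{\widehat{HF}}$ displayed above, together with its analogue for $\underline{HF}^+$ obtained from the standard mapping-cone description of $\underline{CF}^+$, then reduces the lemma to showing that both twisted groups vanish for $(S^2\times S^1,\omega_2)$.

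In Figure~\ref{HTorus} there are exactly two generators $x,y$ and two Whitney disks $\phi_1,\phi_2$ from $x$ to $y$, with domains the bigons $D_1,D_2$. As $1$-chains, $\partial_\alpha\phi_1-\partial_\alpha\phi_2=[\alpha_1]$, and therefore
$$A(\phi_1)-A(\phi_2)=[\alpha_1]\cdot[\omega']=\pm\,\omega\cdot S\ne 0.$$
Consequently $\widehat\partial\,x=T^{A(\phi_2)}\bigl(\pm T^{A(\phi_1)-A(\phi_2)}\pm 1\bigr)\,y$, and the parenthesised factor has valuation $0$ and is therefore a unit in the Novikov field $\mathcal L$. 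Hence $\underline{\widehat{HF}}(S^2\times S^1,\omega_2;\mathcal L)=0$.

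For $\underline{HF}^+$, the basepoint $z$ lies in exactly one of the bigons, say $D_2$; then $n_z(\phi_1)=0$ and $n_z(\phi_2)=1$, so $\partial[x,i]=\epsilon_1 T^{A(\phi_1)}[y,i]+\epsilon_2 T^{A(\phi_2)}[y,i-1]$ for $i\ge 1$ and $\partial[x,0]=\epsilon_1 T^{A(\phi_1)}[y,0]$. An induction on $i$ exhibits each $[y,i]$ as a boundary (start from $\partial[x,0]$; at step $i$, subtract an $\mathcal L$-multiple of $\partial[x,i-1]$ from $\partial[x,i]$ and use invertibility of $T$ and the constants $\epsilon_j$), while inspecting the top-degree coefficient shows no finite $\mathcal L$-combination of the $[x,i]$ can be a cycle. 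Hence $\underline{HF}^+(S^2\times S^1,\omega_2;\mathcal L)=0$. The main subtlety I expect to navigate is the admissibility of this genus-$1$ diagram in the twisted setting and the contribution of non-torsion $\mathrm{Spin}^c$ structures on $S^2\times S^1$; the latter are handled by winding the $\beta$-curve in the standard way, and since the untwisted complex so obtained already has vanishing homology, the same holds after base change to $\mathcal L$.
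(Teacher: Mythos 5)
Your computation of $\underline{\widehat{HF}}$ is essentially the paper's own argument (reduce to $S^2\times S^1$ by the connected sum formula, then observe that the two bigons contribute $\pm(T^{d}-1)y$, a unit multiple of $y$ in $\mathcal L$), and that half is fine. The problem is the $\underline{HF}^+$ half. First, your placement of the basepoint is inconsistent with the diagram and with your own hat computation: in Figure~\ref{HTorus} the point $z$ lies in the third region (an annular region), not in either bigon, so $n_z(D_1)=n_z(D_2)=0$. Indeed, if $z$ lay in $D_2$ as you assume, only $D_1$ would count in the hat differential and the untwisted $\widehat{HF}(S^2\times S^1)$ computed from this diagram would vanish, contradicting $\widehat{HF}(S^2\times S^1)\cong\mathbb Z^2$; and your own formula $\widehat\partial x=T^{A(\phi_2)}(\pm T^{A(\phi_1)-A(\phi_2)}\pm1)y$ already used $n_z(\phi_2)=0$. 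Second, even with $z$ placed correctly, computing $\underline{CF}^+$ directly from the diagram requires controlling \emph{all} Maslov index one classes with $n_z>0$ (for instance nonnegative domains of the form $D_i+A$, $A$ the annular region, in $\pi_2(y,x)$), not just the two bigons; you never address these, and in the twisted setting their contributions do not cancel for free. Third, the reduction of $\underline{HF}^+(Y)$ to the $S^2\times S^1$ summand is not the simple K\"unneth statement you invoke: the connected sum formula for the $+$/$-$ flavors is a tensor product over $\mathcal L[U]$ (via $CF^-$ or $CF^{\le0}$), so deducing vanishing for $Y$ from vanishing for one summand needs an extra homological-algebra step (e.g.\ contractibility of an acyclic finitely generated free $\mathcal L[U]$-complex), which is asserted rather than proved.

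The paper avoids all three issues by a purely formal argument applied directly to $Y$: once $\underline{\widehat{HF}}(Y,\omega;\mathcal L)=0$ is known, the exact triangle $\cdots\to\underline{HF}^+\stackrel{U}{\to}\underline{HF}^+\to\underline{\widehat{HF}}\to\cdots$ shows $U$ acts as an isomorphism on $\underline{HF}^+(Y,\omega;\mathcal L)$, while every element of $HF^+$ is annihilated by a large power of $U$; hence the group vanishes. You should replace your direct $CF^+$ computation and the $HF^+$ K\"unneth step by this $U$-action argument (or else supply the missing disk counts and the derived K\"unneth argument explicitly).
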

\begin{proof}
As in Figure~\ref{HTorus}, $\underline{\widehat{CF}}(S^2\times
S^1,\omega;\mathcal L)$ has two generators $x,y$. There are two
bigons $D_1,D_2$ connecting $x$ to $y$. If $\omega\cdot
(S^2\times\mathrm{point})=d\ne0$, we can assume $A(D_1)=d,
A(D_2)=0$, then $\underline{\partial} x=\pm(T^d-1)y$. This implies
that $\underline{\widehat{HF}}(S^1\times S^2,\omega;\mathcal L)=0$
since $T^d-1$ is invertible in $\mathcal L$.

If $Y$ contains a non-separating two-sphere $S$ and $\omega\cdot
S\ne0$, then $Y$ has a summand $S^2\times S^1$ such that
$\omega\cdot (S^2\times\mathrm{point})\ne0$. The connected sum
formula then shows that
$\underline{\widehat{HF}}(Y,\omega;\mathcal L)=0$.

For $\underline{HF}^+$, it follows from the long exact sequence
$$\begin{CD}
\cdots@>>>\underline{HF}^+@>U>> \underline{HF}^+@>>>
\underline{\widehat{HF}}@>>>\cdots
\end{CD}
$$
that $U$ is an isomorphism. For any element $x\in
\underline{HF}^+(Y,\omega;\mathcal L)$, $U^nx=0$ when $n$ is
sufficiently large, so $\underline{HF}^+(Y,\omega;\mathcal L)=0$.
\end{proof}

The following theorem is a twisted version of
\cite[Theorem~1.1]{Ni2}.

\begin{thm}\label{TwistKnotFibre}
Suppose $K$ is a null-homologous knot in a closed, oriented,
connected 3--manifold $Y$, $Y-K$ is irreducible, and $F$ is a
genus $g$ Seifert surface of $K$. Let $\omega\subset Y-K$ be a
closed curve. If
$\underline{\widehat{HFK}}(Y,K,\omega,[F],g;\mathcal L
)\cong\mathcal L$, then $K$ is fibred, and $F$ is a fibre of the
fibration.
\end{thm}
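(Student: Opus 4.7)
My plan is to mimic, step by step, the proof of the untwisted statement \cite[Theorem 1.1]{Ni2}, replacing each ingredient by its twisted counterpart. The coefficient ring $\mathcal L$ is a field, so ``monic'' still means ``isomorphic to the ground ring''; the Alexander filtration and its top term $\underline{\widehat{HFK}}(Y,K,\omega,[F],g;\mathcal L)$ are defined exactly as in the untwisted setting since $\omega\subset Y-K$ avoids $K$. Throughout, I will assume for contradiction that $K$ is not fibred with fibre $F$; by \cite[Theorem 1.1]{OSzGenus} and Gabai's theory, the sutured manifold $Y(F)$ obtained by cutting $Y-\nu K$ along $F$ is taut but not a product sutured manifold.

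First, I would invoke ingredient (I) from the introduction: under the assumption that $K$ is not fibred, the argument of \cite[Section 6]{Ni2} (using Gabai's construction of branched surfaces carrying taut foliations) produces two taut foliations $\mathcal F_0,\mathcal F_1$ on $Y$ which have $F$ as a compact leaf and which are not smoothly isotopic rel $F$. These perturb to weakly symplectically fillable contact structures $\xi_0,\xi_1$ on $Y$ whose $\mathrm{Spin}^c$-structures sit in the extremal Alexander grading with respect to $F$.

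Next, I would apply ingredient (II) in its twisted form. The non-vanishing of the contact invariant for a contact structure obtained by perturbing a taut foliation was originally established by Ozsv\'ath--Szab\'o precisely with twisted coefficients (see \cite[Theorem~4.2]{OSzGenus}) to bypass the $b_1=1$ assumption; the upgrade to the refined contact invariant needed here is Ghiggini's argument in \cite{Gh}, which is a statement in contact topology and transfers verbatim to the twisted setup. Thus $c(\xi_0;\mathcal L)$ and $c(\xi_1;\mathcal L)$ are both nonzero elements of $\underline{\widehat{HFK}}(Y,K,\omega,[F],g;\mathcal L)$. Since they come from non-isotopic foliations cut along $F$, I would argue that they are linearly independent over $\mathcal L$ by comparing their images under the map induced by the cobordism that caps off $F$ (as in \cite{Gh,Ni2}), forcing the top group to have $\mathcal L$-rank at least $2$, contradicting the hypothesis.

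The main obstacle will be the degenerate case where ingredient (I) fails, i.e.\ the branched surface construction does not produce two distinct taut foliations. In the untwisted proof this is handled by ingredient (III): horizontal decomposition and decomposition along a separating product annulus are used to strip off inessential pieces of $Y(F)$, reducing the question to the previously-resolved case. To finish the twisted proof I would verify that the decomposition formulas \cite[Theorem 4.1 and Theorem 5.1]{Ni2} admit twisted analogues: the horizontal decomposition is nearly formal because the additional factor $T^{A(\phi)}$ multiplies only holomorphic-disc counts which are unaffected by the Sarkar--Wang-type handleslide argument, while the separating product annulus decomposition is the delicate part, since one must track intersections of $\omega'$ with the arcs produced by the surgery/annulus argument of \cite{Ni1}. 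I expect this to be the true technical heart of the proof, though it should go through because $\omega$ can be arranged disjoint from the product annulus, making all the relevant $A$-values vanish, so the untwisted chain-level argument tensors with $\mathcal L$ without modification.
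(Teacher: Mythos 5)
Your strategy---re-running the whole argument of \cite{Ni2} with $\mathcal L$-coefficients---is precisely the route the paper declines (``we could prove this theorem by repeating the whole proof in \cite{Ni2}, but we would rather \dots apply \cite[Theorem~1.1]{Ni2} directly''), and while it is plausible in principle, as written it has a genuine gap at exactly the place you call the technical heart, and the reason you offer for why that step should go through is not valid. The twisting depends only on the homology class $[\omega]$, and the weight $T^{A(\phi)}$, with $A(\phi)=(\partial_{\alpha}\phi)\cdot\omega'$, is attached to \emph{every} holomorphic disk in the diagram, not just to disks meeting the product annulus; so arranging $\omega$ to be disjoint from the annulus does not make the relevant $A$-values vanish, and the twisted complex is isomorphic to the untwisted complex tensored with $\mathcal L$ only when $[\omega]=0$ in the manifold at hand. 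Hence ``the untwisted chain-level argument tensors with $\mathcal L$ without modification'' is false whenever $[\omega]\neq 0$, and a twisted analogue of \cite[Theorem~5.1]{Ni2} would require redoing the diagram-level analysis of \cite{Ni1} while checking that the correspondence of disks preserves the $A$-weights. The same ``transfers verbatim'' optimism is applied to the twisted nonvanishing and linear independence of the two contact invariants (ingredient (II)), and you do not address the very first step of the \cite{Ni2} scheme, namely that the complement of $F$ is a homology product, whose untwisted proof rests on the Euler characteristic identification with the Alexander polynomial and needs a twisted (torsion-theoretic) replacement. So the hardest ingredients are deferred rather than proved, and the one justification given is incorrect.

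For comparison, the paper's proof is a short reduction that avoids all new holomorphic-disk analysis: cutting $Y-K$ along $F$ gives a homology product (the proof of \cite[Proposition~3.1]{Ni2} adapts), so one can reglue $R_+(\gamma)$ to $R_-(\gamma)$ to obtain the exterior of a knot $K'$ in a manifold $Y'$ with $b_1(Y')=0$; the regluing is realized by Dehn surgeries on curves in $F$, so a filtered version of the twisted surgery exact triangle (\ref{Eq:TwistExact}) together with the adjunction inequality, as in \cite[Proposition~3.5]{Ni1}, transports the hypothesis to $\underline{\widehat{HFK}}(Y',K',\omega,[F],g;\mathcal L)\cong\mathcal L$. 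Since $b_1(Y')=0$ forces $[\omega]=0$, the twisting disappears there, giving $\widehat{HFK}(Y',K',[F],g;\mathbb Q)\cong\mathbb Q$, and the untwisted \cite[Theorem~1.1]{Ni2} applies to show $K'$, hence $K$, is fibred with fibre $F$. To repair your proposal, either carry out the twisted analogues of (II) and (III) in detail (tracking the $A$-weights through the arguments of \cite{Ni1,Ni2,Gh}), or switch to this reduction, which only uses the twisted exact triangle, adjunction, and the observation that twisting is invisible when $b_1=0$.
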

\begin{proof}
We could prove this theorem by repeating the whole proof in
\cite{Ni2}, but we would rather choose to apply
\cite[Theorem~1.1]{Ni2} directly.

Let $(M,\gamma)$ be the sutured manifold (see
\cite[Definition~2.1]{Ni2}) obtained by cutting $Y-K$ open along
$F$. The proof of \cite[Proposition~3.1]{Ni2} shows that $M$ is a
homology product. Hence we can glue $R_+(\gamma)$ to $R_-(\gamma)$
by a suitable homeomorphism to get a manifold with torus boundary,
which is the exterior of a knot $K'$ in a manifold $Y'$ with
$b_1(Y')=0$. This cut-and-reglue process can be realized by Dehn
surgeries on knots in $F$, so $\omega$ can be regarded as a curve
in $Y'-K'$. As in \cite[Proposition 3.5, the second proof]{Ni1},
using a filtered version of the exact sequence
(\ref{Eq:TwistExact}) and the adjunction inequality, we can show
that $\underline{\widehat{HFK}}(Y',K',\omega,[F],g;\mathcal L
)\cong\mathcal L$.

Since $b_1(Y')=0$, there is no real ``twisting" at all, namely,
$$\underline{\widehat{CFK}}(Y',K',\omega;\mathcal L
)\cong\widehat{CFK}(Y',K';\mathbb Q)\otimes_{\mathbb Q}\mathcal
L.$$ So we get ${\widehat{HFK}}(Y',K',[F],g;\mathbb Q)\cong\mathbb
Q$. Now \cite[Theorem~1.1]{Ni2} implies that $K'$ is fibred with
fibre $F$, hence so is $K$.
\end{proof}

\section{A homological version of the main theorem}

The goal of this section is to prove the following homological
version of the main theorem.

\begin{prop}\label{HomoProd}
Suppose $Y$ is a closed $3$--manifold, $F\subset Y$ is a closed
non-separating connected surface of genus $g\ge2$. Let $M$ be the
$3$--manifold obtained by cutting $Y$ open along $F$. The two
boundary components of $\partial M$ are denoted by $F_-,F_+$. If
$HF^+(Y,[F],g-1)\cong\mathbb Z$, then $M$ is a homology product,
namely, the two maps
$$i_{\pm*}\co H_*(F_{\pm})\to H_*(M)$$
are isomorphisms.
\end{prop}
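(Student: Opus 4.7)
The plan is to translate the hypothesis $HF^+(Y,[F],g-1)\cong\mathbb Z$ into a monic condition on the Alexander polynomial of the infinite cyclic cover of $Y$ dual to $[F]$, and then extract the homology-product property of $M$ from it. Let $\tilde Y\to Y$ be this cyclic cover, built by gluing copies of $M$ end to end along $F_\pm$. The Mayer--Vietoris sequence for $\tilde Y$ presents $H_*(\tilde Y;\mathbb Q)$ as the cone of the $\mathbb Q[t,t^{-1}]$-module map
$$\Phi \co H_*(F)\otimes\mathbb Q[t,t^{-1}]\xrightarrow{\;i_+ - t\cdot i_-\;}H_*(M)\otimes\mathbb Q[t,t^{-1}],$$
and, up to units in $\mathbb Q[t,t^{-1}]$, the alternating product of $\det\Phi$ across degrees recovers the Turaev torsion $\Delta_Y(t)$ of $(Y,[F]^*)$. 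The homology-product conclusion amounts (rationally, upgrading to integer coefficients for free since $H_*(F)$ is torsion-free) to $i_+$ and $i_-$ being isomorphisms, which can be reformulated as $\Phi$ being invertible in the appropriate sense.

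For the Floer-theoretic input, I would invoke the Ozsv\'ath--Szab\'o identification of $\chi(HF^+(Y,\mathfrak s))$ with coefficients of Turaev's refined torsion. Summing this identification over the extremal Spin$^c$ slice $\{\mathfrak s:\langle c_1(\mathfrak s),[F]\rangle=2g-2\}$ should identify $\chi(HF^+(Y,[F],g-1))$ with the topmost coefficient of $\Delta_Y(t)$. The hypothesis then yields that this top coefficient equals $\pm 1$, so $\Delta_Y$ is monic in its top degree. Combined with the half-lives-half-dies bound on $(M,\partial M)$ (which forces the image of $H_1(\partial M)\to H_1(M)$ to have rank $2g$) and the matching rational Euler characteristics of $F$ and $M$, this monicness forces $i_\pm$ to be isomorphisms on $H_1$; the analogous statements in degrees $0$ and $2$ are straightforward since the corresponding ranks are both $1$.

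The main obstacle will be the Floer-to-torsion identification in the second paragraph: when $b_1(Y)\ge 2$ the untwisted Euler characteristic of each individual $HF^+(Y,\mathfrak s)$ can vanish, so pinning down the topmost torsion coefficient from the summed group $HF^+(Y,[F],g-1)$ is subtle. I expect to need the twisted Heegaard Floer homology of Section~2, with an auxiliary cycle $\omega$ chosen so that $\omega\cdot F=1$, in order to make the Novikov-ring-valued Euler characteristic both meaningful and equal to the correct coefficient of $\Delta_Y(t)$; Lemma~\ref{Lem:TrivialS2} and the exact sequence (\ref{Eq:TwistExact}) should control the auxiliary pieces that arise in reducing to this twisted setting.
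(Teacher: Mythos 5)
Your first step agrees with the paper's: using (\ref{TuraevTor}) and (\ref{ChiHF}), the hypothesis $HF^+(Y,[F],g-1)\cong\mathbb Z$ forces the extremal coefficient of $\sigma(\tau(Y,\mathfrak s))$ to be $\pm1$, where $\sigma$ is intersection with $[F]$. Note, however, that the ``main obstacle'' you flag is not where the difficulty lies: the Spin$^c$ structures in the extremal slice are nontorsion (since $\langle c_1(\mathfrak s),[F]\rangle=2g-2\ne0$), so the untwisted identification $\chi(HF^+(Y,\mathfrak s))=\pm T(Y,\mathfrak s)$ of \cite{OSzAnn2} already applies even when $b_1(Y)\ge2$; no Novikov-ring coefficients are needed here (the paper uses twisted coefficients only later, in the proof of the main theorem, not in this proposition). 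The genuinely delicate case is rather $b_1(Y)=1$, where the torsion function requires a choice of chamber.

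The second half of your argument has real gaps. First, your Mayer--Vietoris map $\Phi=i_+-t\,i_-$ need not be square: a priori $b_1(M)$ can exceed $2g$ (half-lives-half-dies and $\chi(M)=\chi(F)$ only give $b_1(M)-b_2(M)=2g-1$), so ``$\det\Phi$'' is undefined until one first shows that $H_*(\widetilde Y;\mathbb Q)$ is $\mathbb Q[t,t^{-1}]$-torsion; moreover the one-variable torsion differs from $\sigma(\tau(Y,\mathfrak s))$ by the degree-$0$ and degree-$2$ correction factors (this is exactly the role of $(g_r-1)(h_s-1)$ in (\ref{TorEq})), and without controlling these the degree bookkeeping ($2g-2$ versus $2g$) and the identification of the extremal coefficient with $\det(i_\pm)$ are not established. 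The implication ``monic $\Rightarrow i_\pm$ isomorphisms on $H_1$'' is asserted, not proved, and rank constraints alone cannot give it. Second, and most concretely, the proposition is an integral statement, and ``upgrading to integer coefficients for free since $H_*(F)$ is torsion-free'' is not a valid step: a map of torsion-free groups can be a rational isomorphism with nontrivial finite cokernel, and $H_1(M)$ may have torsion. The paper handles both points at once by constructing a Heegaard diagram adapted to $F$ (Construction~\ref{RelMorse0}), so that after applying $\sigma$ the matrix in Turaev's formula (\ref{TorEq}) has entries $c^{\#}_{ij}+c^{+}_{ij}q$ of $q$-degree at most one; monicness then forces the integral unimodularity (\ref{Eq:Det=1}) of the middle-surface intersection matrix, and the homology-product conclusion follows from the field-by-field sutured-manifold argument of \cite[Lemmas 3.2 and 3.3, Proposition 3.1]{Ni2}. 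To salvage your route you would need to prove torsion-ness of $H_*(\widetilde Y;\mathbb Q)$ first (so that $b_1(M)=2g$ and the determinants make sense), keep track of the $H_0$ and $H_2$ factors, and run the determinant argument over $\mathbb Z$ or over every prime field rather than over $\mathbb Q$.
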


We will make use of the fact that the Euler characteristic of
$HF^+$ is Turaev's torsion function. A Heegaard diagram for $Y$
will be constructed, and will be used to study the torsion
function of $Y$. Then Proposition~\ref{HomoProd} can be proved
using the same argument as \cite[Proposition 3.1]{Ni2}.

\subsection{A Heegaard diagram for $Y$}

\begin{construction}\label{RelMorse0}
We will construct a Heegaard diagram for $Y$ in a similar manner
as in \cite[Construction 2.10]{Ni2}.

\noindent{\bf Step 0.}\quad{\sl A relative Morse function}

Let $M$ be the compact manifold obtained by cutting open $Y$ along
$F$; the two components of $M$ are denoted by $F_-,F_+$. Let
$\psi\co F_+\to F_-$ be the gluing map. Consider a self-indexed
relative Morse function $u$ on $M$. Namely, $u$ satisfies:

(1) $u(M)=[0,3]$, $u^{-1}(0)=F_-$, $u^{-1}(3)=F_+$.

(2) $u$ has no degenerate critical points,
$u^{-1}\{\textrm{critical points of index $i$}\}=i$.

(3) $u$ has no critical points on $F_{\pm}$.

Let $F_{\#}=u^{-1}(\frac32)$.

Suppose $u$ has $r$ index $1$ critical points. Then the genus of
$F_{\#}$ is $g+r$. The gradient $-\nabla u$ generates a flow
$\phi_t$ on $M$. There are $2r$ points on $F_+$, which are
connected to index $2$ critical points by flowlines. We call these
points ``bad" points. Similarly, there are $2r$ bad points on
$F_-$, which are connected to index $1$ critical points by
flowlines.

\noindent{\bf Step 1.}\quad{\sl Construct some curves}

Choose two disjoint disks $D_+^a,D_+^b\subset F_+$. Flow the two
disks
 by $\phi_t$, their images on $F_{\#}$
and $F_-$ are $D^a_{\#},D^a_-,D^b_{\#},D^b_-$. (We choose the
disks generically, so that the flowlines starting from them do not
terminate at critical points.) We can suppose the gluing map
$\psi$ is equal to the intersection of the flow $\phi_t$ with
$F_-$ when restricted to $D^a_+\cup D^b_+$. Let
$A_{\pm}=F_{\pm}-\mathrm{int}(D^a_{\pm}\cup D^b_{\pm})$,
$A_{\#}=F_{\#}-\mathrm{int}(D^a_{\#}\cup D^b_{\#})$.

On $F_{\#}$, there are $r$ simple closed curves
$\alpha_{2g+1},\dots,\alpha_{2g+r}$, which are connected to index
$1$ critical points by flowlines. And there are $r$ simple closed
curves $\beta_{2g+1},\dots,\beta_{2g+r}$, which are connected to
index $2$ critical points by flowlines.

Choose $2g$ disjoint arcs $\xi^-_1,\dots,\xi^-_{2g}\subset A_-$,
such that their endpoints lie on $\partial D^b_-$, and they are
linearly independent in $H_1(A_-,\partial A_-)$. We also suppose
they are disjoint from the bad points. Let
$\xi^+_i=\psi^{-1}(\xi^-_i)$. We also flow back
$\xi^-_1,\dots,\xi^-_{2g}$ by $\phi_{-t}$ to $F_{\#}$, the images
are denoted by $\xi_1^{\#},\dots,\xi_{2g}^{\#}$.

Choose $2g$ disjoint arcs $\eta^+_1,\dots,\eta^+_{2g}\subset A_+$,
such that their endpoints lie on $\partial D^a_+$, and they are
linearly independent in $H_1(A_+,\partial A_+)$. We also suppose
they are disjoint from the bad points. Flow them by $\phi_t$ to
$F_{\#}$, the images are denoted by
$\eta_1^{\#},\dots,\eta_{2g}^{\#}$.

\noindent{\bf Step 2.}\quad{\sl Construct a diagram}

Suppose $[c_1,c_2]$ is a subinterval of $[0,3]$, let $(\partial
D^a_+)\times[c_1,c_2]$ be the annulus which is the image of
$\partial D^a_+$ inside $u^{-1}([c_1,c_2])$ under the flow
$\phi_t$. Similarly, define $(\partial D^b_+)\times[c_1,c_2]$. Let
$$\Sigma=A_+\cup A_{\#}\cup\big((\partial D^a_+)\times[\frac32,3]\big)\cup\big((\partial D^b_+)\times[0,\frac32]\big).$$
 Let
$$\alpha_i=\xi^+_i\cup\xi_i^{\#}\cup\{\textrm{2 arcs}\},$$
where the 2 arcs are vertical arcs connecting $\xi^+_i$ to
$\xi_i^{\#}$ on a corresponding annulus, $i=1,\dots,2g$.
Similarly, let
$$\beta_i=\eta^+_i\cup{\eta}_i^{\#}\cup\{\textrm{2 arcs}\}.$$
Let $\alpha_0=\partial{D^a_{\#}}$, $\beta_0=\partial{D^b_{\#}}$.

Let
\begin{eqnarray*}
\mbox{\boldmath${\alpha}$} &= &
\{\alpha_1,\dots,\alpha_{2g}\}\cup\{{\alpha}_{2g+1},\dots,{\alpha}_{2g+r}\}\cup\{\alpha_0\},\\
\mbox{\boldmath${\beta}$} &= &
\{\beta_1,\dots,\beta_{2g}\}\cup\{{\beta}_{2g+1},\dots,{\beta}_{2g+r}\}\cup\{\beta_0\}.
\end{eqnarray*}

\noindent{\bf Step 3.}\quad{\sl Check that
$(\Sigma,\mbox{\boldmath${\alpha}$}, \mbox{\boldmath$\beta$})$ is
a Heegaard diagram for $Y$}

This step is quite routine, we leave the reader to check the
following

\noindent(A) $\Sigma$ separates $Y$ into two genus $(2g+1+r)$
handlebodies $U_1,U_2$. Every curve in
$\mbox{\boldmath${\alpha}$}$ bounds a disk in $U_1$, every curve
in $\mbox{\boldmath${\beta}$}$ bounds a disk in $U_2$.

\noindent(B) $\Sigma-\mbox{\boldmath${\alpha}$}$ is connected,
$\Sigma-\mbox{\boldmath${\beta}$}$ is connected.

Then
$$(\Sigma,\mbox{\boldmath${\alpha}$},
\mbox{\boldmath$\beta$})$$ is a Heegaard diagram for $Y$. \qed
\end{construction}

\subsection{Preliminaries on Turaev's torsion function}

The Euler characteristic of $HF^+$ is equal to Turaev's torsion
function $T$. In this subsection we will briefly review the
definition of $T$. The readers are referred to \cite{Tu} if more
details are desired.

Suppose $Y$ is a closed oriented 3--manifold. The group
$H=H_1(Y;\mathbb Z)$ acts on $\mathrm{Spin}^c(Y)$. As in
\cite{OSzAnn1}, we denote this action by addition. Fix a finite
$CW$ decomposition of $Y$, $\widetilde Y$ is the maximal abelian
cover of $Y$ with its induced $CW$ structure. A family of cells
(of all dimensions) in $\widetilde Y$ is said to be {\it
fundamental} if over each cell of $Y$ lies exactly one cell of
this family. Choose a fundamental family of cells $\widetilde e$
in $\widetilde Y$, we get a basis for the cellular chain complex
$C_*(\widetilde Y)$ over the group ring $\mathbb Z[H]$. As shown
in \cite{Tu}, $\widetilde e$ also gives rise to a Spin$^c$
structure $\mathfrak s\in\mathrm{Spin}^c(Y)$.

Let $\mathbb F$ be a field and $\varphi\co H\to\mathbb F^{\times}$
be a group homomorphism, $\mathfrak s\in\mathrm{Spin}^c(Y)$, and
$\widetilde e$ be a fundamental family of cells which gives rise
to $\mathfrak s$. Then one can define $\tau^{\varphi}(Y,\mathfrak
s)\in\mathbb F$ to be the Reidemeister--Franz torsion of
$C_*(\widetilde Y)$ as in \cite[Section 2.3]{Tu}.

Let $Q(H)$ be the classical ring of quotients of the group ring
$\mathbb Q[H]$. $Q(H)$ splits as a finite direct sum of fields:
$$Q(H)=\oplus_{i=1}^n\mathbb F_i.$$ $\mathbb F_i$ is in the form
$\mathbb K_i(t_1,t_2,\dots,t_b)$, where $\mathbb K_i$ is a
cyclotomic field and $b=b_1(Y)$. Since $H\subset\mathbb
Q[H]\subset Q(H)$, there are natural projections $\varphi_i\co
H\to\mathbb F_i$. Turaev defined
$$\tau(Y,\mathfrak s)=\sum_{i=1}^n\tau^{\varphi_i}(Y,\mathfrak s)\in\oplus_{i=1}^n\mathbb F_i=Q(H),$$
and showed that $\tau(Y,\mathfrak s)\in\mathbb Z[H]$ when
$b_1(Y)\ge2$. The coefficients of $\tau(Y,\mathfrak s)$ gives the
torsion function $T$. More precisely, when $b_1(Y)\ge 2$, $T$ is
defined by the following formula:
\begin{equation}\label{TuraevTor}
\tau(Y,\mathfrak s)=\sum_{h\in H}T(Y,\mathfrak s-h)h;
\end{equation}
and when $b_1(Y)=1$, one can define
$T_t\co\mathrm{Spin}^c(Y)\to\mathbb Z$ in a similar way, once a
chamber $t$ of $H_1(Y;\mathbb R)$ is chosen.

Suppose the $CW$ decomposition of $Y$ has one $0$--cell, $m$
$1$--cells, $m$ $2$--cells and one $3$--cell. The chain complex
$C_*(\widetilde Y)=(C_3\to C_2\to C_1\to C_0)$, where
$C_0,C_1,C_2,C_3$ are free $\mathbb Z[H]$--modules with ranks
$1,m,m,1$, respectively. Let $h_1,\dots,h_m$ be the generators of
$H$ represented by the $1$--cells, and $g_1,\dots,g_m\in H$ be the
elements which are dual to the $2$--cells.

Denote by $\Delta_{r,s}$ the determinant of the matrix obtained
from the $m\times m$--matrix (over $\mathbb Z[H]$) of the boundary
homomorphism $C_2\to C_1$ by deleting the $r$th row and $s$th
column. Turaev proved the following equation (see (4.1.a) in
\cite{Tu}):
\begin{equation}\label{TorEq}
\tau(Y,\mathfrak s)(g_r-1)(h_s-1)=\pm\Delta_{r,s}\in\mathbb Z[H].
\end{equation}

When $\mathfrak s$ is a nontorsion Spin$^c$ structure, Ozsv\'ath
and Szab\'o showed in \cite{OSzAnn2} that
\begin{equation}\label{ChiHF}
\chi(HF^+(Y,\mathfrak s))=\pm T(Y,\mathfrak s).
\end{equation}

\subsection{Proof of the homological version}\label{PrHomoVer}

We deal with the case of $b_1(Y)\ge2$ first. We will use the
Heegaard splitting in Construction~\ref{RelMorse0} as the fixed
$CW$ decomposition of $Y$. Each $\alpha_i$ corresponds to a
$1$--handle in $Y$, let $a_i$ be the $1$--cell which is the core
of the $1$--handle; each $\beta_i$ corresponds to a $2$--handle in
$Y$, let $b_i$ be the $2$--cell which is the core of the
$2$--handle.

Let $\sigma\co H\to\mathbb Z$ be the group homomorphism given by
counting the intersection number with $[F]$. We construct the
universal abelian cover $\widetilde{Y}$ of $Y$ in two steps. First
we take the infinite cyclic cover of $Y$ dual to $\sigma$, denoted
by $Y^{\sigma}$, which is the union of infinitely many copies of
$M$:
$$Y^{\sigma}=\cdots\cup_{F_{-1}}M_{-1}\cup_{F_0}M_0\cup_{F_1}M_1\cup_{F_2}M_2\cup_{F_3}\cdots,$$
where $F_0,F_1$ are identified with $F_-,F_+$, respectively, if
$M_0$ is identified with $M$. Then we take the cover
$\pi\co\widetilde{Y}\to Y^{\sigma}$.

We choose a lift of the $0$--cell, a lift of the $3$--cell, and
$\widetilde{a_i},\widetilde{b_i}$ which are lifts of the
$1$--cells and $2$--cells. All lifts are chosen in
$\pi^{-1}(M_0)$. This fundamental family of cells gives rise to a
Spin$^c$ structure $\mathfrak s\in\mathrm{Spin}^c(Y)$.

We extend the group homomorphism $\sigma$ to a map of the group
rings
$$\sigma\co\mathbb Z[H]\to\mathbb Z[\mathbb Z]=\mathbb Z[q,q^{-1}].$$
By Equation (\ref{TuraevTor}), Equation (\ref{ChiHF}) and the
assumption that $$HF^+(Y,[F],g-1)\cong HF^+(Y,[F],1-g)\cong\mathbb
Z,$$ we conclude that $\sigma(\tau(Y,\mathfrak s))$ is a Laurent
polynomial of $q$ with degree $2g-2$, and the coefficients of the
highest term and the lowest term are $\pm1$. Here the degree of a
Laurent polynomial is defined to be the difference of the degree
of the highest term and the degree of the lowest term.

In Equation (\ref{TorEq}), we choose $g_r\in H$ to be the element
dual to the $2$--cell corresponding to $\beta_0$, and $h_s$ to be
the element represented by the $1$--cell corresponding to
$\alpha_0$. Note that if we cap off $\partial
A_+=\alpha_0\cup\beta_0$, we get the surface $F$ from $A_+$. We
thus have $\sigma(g_r)=q^{\pm1},\sigma(h_s)=q^{\pm1}$. So
$\sigma(\text{\rm LHS of (\ref{TorEq})})$ is a degree $2g$ Laurent
polynomial with leading coefficient $\pm1$. Here the leading
coefficient of a Laurent polynomial is defined to be the
coefficient of its lowest term.

Now we analyze the boundary map $\partial\co C_2\to C_1$. Suppose
$1\le i,j\le2g+r$, and the coefficient of $\widetilde{a_i}$ in
$\partial\widetilde{b_j}$ is $c_{ij}\in\mathbb Z[H]$. If $x\in
\alpha_i\cap \beta_j$ lies in $A_{\#}\subset F_{\#}$, then $x$
contributes a lift of $a_i$
 in $\pi^{-1}(M_0)$ to $\partial \widetilde{b_j}$ since $\widetilde{b_j}\subset \pi^{-1}(M_0)$;
 if $x\in \alpha_i\cap \beta_j$ lies in $A_{+}\subset F_{+}$,
then $x$ contributes a lift of $a_i$ in $\pi^{-1}(M_1)$ to
$\partial \widetilde{b_j}$. Let $c^{\#}_{ij}$ (or $c^+_{ij}$) be
the intersection number of $\alpha_i$ and $\beta_j$ inside the
domain $A_{\#}$ (or $A_+$), then we conclude that
$$\sigma(c_{ij})=c^{\#}_{ij}+c^+_{ij}q.$$
 If $i$ or $j$ is bigger than $2g$, then
$c^+_{ij}=0$.

Consider the matrix $\tensor{C}=(c_{ij})_{1\le i,j\le2g+r}$. The
result in the last paragraph implies that
$\sigma(\det(\tensor{C}))$ is a polynomial of degree at most $2g$,
and its constant term is $\det(c^{\#}_{ij})_{1\le i,j\le2g+r}$. By
(\ref{TorEq}), $\sigma(\det(\tensor{C}))$ is a degree $2g$ Laurent
polynomial with leading coefficient $\pm1$. Hence
\begin{equation}\label{Eq:Det=1}
\det(c^{\#}_{ij})_{1\le i,j\le2g+r}=\pm1.
\end{equation}

In Construction~\ref{RelMorse0}, let
$$N=M-\big(\mathrm{int}(D^+_b)\times[0,3]\big),\quad\gamma=(\partial D^+_b)\times[0,3].$$
$(N,\gamma)$ is a sutured manifold. We claim that $(N,\gamma)$ is
a homology product, namely, $$H_*(N,R_-(\gamma))\cong
H_*(N,R_+(\gamma))\cong0.$$ The proof is the same as
\cite[Proposition 3.1]{Ni2}. In fact, as in \cite[Lemmas 3.2 and
3.3]{Ni2}, using (\ref{Eq:Det=1}), one can show that
$H_2(N;\mathbb F)=0$ and the map
$$i_*\co H_1(R_-(\gamma),\partial R_-(\gamma);\mathbb F)\to H_1(N,\gamma;\mathbb F)$$
is injective for any field $\mathbb F$. Then the homological
argument as in \cite[Proposition 3.1]{Ni2} shows that $(N,\gamma)$
is a homology product. Since $M$ is obtained by capping off
$\gamma$ by $D^2\times I$, $M$ is also a homology product.

For the case of $b_1(Y)=1$, the proof is essentially the same.

\section{Characteristic product pairs}\label{CharPPair}

Using the surgery exact sequence and the adjunction inequality,
one can prove the following result. Details of the proof can be
found in \cite[Lemma 5.4]{OSzSympl} and \cite[Proposition 3.5, the
second proof]{Ni1}.

\begin{lem}[Ozsv\'ath--Szab\'o]\label{CutReglue}
Suppose $F$ is a closed connected surface in a closed manifold
$Y$, and the genus of $F$ is $g\ge2$. Let $Y'$ be the manifold
obtained by cutting open $Y$ along $F$ and regluing by a
self-homeomorphism of $F$. Let $HF^{\circ}$ denote one of
$\widehat{HF}$ and $HF^+$. Then we have
$$HF^{\circ}(Y,[F],g-1)\cong HF^{\circ}(Y',[F],g-1).$$\qed
\end{lem}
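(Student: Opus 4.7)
The plan is to use the Ozsv\'ath--Szab\'o surgery exact triangle together with the adjunction inequality, after reducing to the case of a single Dehn twist. Since $g\ge 2$, any orientation-preserving self-homeomorphism of $F$ is isotopic to a composition of Dehn twists along non-separating simple closed curves; composing the resulting isomorphisms, it thus suffices to prove the lemma when $Y'$ is obtained from $Y$ by cutting along $F$ and regluing by $\tau_\gamma^{\pm 1}$ for a single non-separating simple closed curve $\gamma\subset F$.

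The cut-and-reglue just described is equivalent to performing $(\mp 1)$-Dehn surgery on a curve $\gamma'\subset Y\setminus F$ obtained by pushing $\gamma$ slightly off $F$. I would then invoke the integer surgery exact triangle
$$\cdots\longrightarrow HF^\circ(Y)\longrightarrow HF^\circ(Y_0(\gamma'))\longrightarrow HF^\circ(Y')\longrightarrow\cdots,$$
and observe that, because $F$ is disjoint from $\gamma'$, it survives as a closed surface in all three manifolds and in the two $2$-handle cobordisms realizing the maps; consequently the triangle respects the splitting by $\langle c_1(\cdot),[F]\rangle$ and restricts to a triangle on the $(g-1)$-summand.

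The heart of the argument is then to show that the middle term $HF^\circ(Y_0(\gamma'),[F],g-1)$ vanishes. In $Y_0:=Y_0(\gamma')$ the curve $\gamma$ bounds a disk, namely the annulus between $\gamma$ and $\gamma'$ capped off by the surgery disk, so one may surger $F$ along $\gamma$ to produce a closed embedded surface $\hat F\subset Y_0$ of genus $g-1\ge 1$, with $[F]=[\hat F]+[S]$ in $H_2(Y_0)$ where $S$ is the $2$-sphere created by the surgery. For any Spin$^c$ structure $\mathfrak t$ on $Y_0$ contributing to the $(g-1)$-summand with respect to $[F]$, a Spin$^c$ bookkeeping argument combined with the adjunction inequality applied to $\hat F$ (which has genus $g-1$) forces $HF^\circ(Y_0,\mathfrak t)=0$, yielding the desired vanishing. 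Exactness of the triangle then identifies $HF^\circ(Y,[F],g-1)$ with $HF^\circ(Y',[F],g-1)$.

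The main obstacle I expect is the precise Spin$^c$ bookkeeping: among the Spin$^c$ structures $\mathfrak t$ on $Y_0$ lifting a fixed one on the surgery-link complement, one has to check that the constraint $\langle c_1(\mathfrak t),[F]\rangle=2g-2$ places $\langle c_1(\mathfrak t),[\hat F]\rangle$ outside the adjunction window $[-(2g-4),2g-4]$ allowed by the genus of $\hat F$, so that every contributing summand is killed. The hypothesis $g\ge 2$ is used twice: to make $g(\hat F)\ge 1$ so the adjunction inequality applied to $\hat F$ is nontrivial, and to permit the reduction to non-separating Dehn twists via Lickorish's generating theorem for the mapping class group.
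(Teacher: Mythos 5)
Your argument is exactly the one the paper has in mind: the lemma is quoted from Ozsv\'ath--Szab\'o and Ni, whose proofs proceed precisely by writing the regluing map as a composition of Dehn twists along non-separating curves, realizing each twist as a $(\pm1)$--surgery on a push-off of the curve, and killing the third term $HF^{\circ}(Y_0(\gamma'),[F],g-1)$ of the surgery triangle via the adjunction inequality. The only small wrinkle is that the Spin$^c$ bookkeeping you worry about is immediate: the compressed surface $\hat F$ is homologous to $F$ in $Y_0(\gamma')$ (the sphere you introduce is null-homologous, and if it were not, sphere adjunction would give vanishing anyway), so $\langle c_1(\mathfrak t),[\hat F]\rangle=2g-2>2(g-1)-2$ directly forces the vanishing.
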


We also need the following two simple lemmas.

\begin{lem}\label{Lem:SplitProd}
Suppose $M$ is a compact $3$--manifold with two boundary
components $F_-,F_+$, and $M$ is a homology product, namely,
$$H_*(M,F_-)\cong H_*(M,F_+)\cong0.$$
Suppose $F_0\subset M$ is a closed surface that is homeomorphic to
$F_-$, and $F_0$ splits $M$ into two parts $M_-,M_+$,  $\partial
M_{\pm}=F_0\cup F_{\pm}$. Then both $M_-$ and $M_+$ are homology
products.
\end{lem}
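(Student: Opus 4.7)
The plan is to apply Mayer--Vietoris to the decomposition $M = M_- \cup_{F_0} M_+$, then use Poincaré--Lefschetz duality on the two halves. The starting observation is that the homology-product hypothesis makes $F_\pm \hookrightarrow M$ homology equivalences, and since each factors through $M_\pm$, the map $j_\pm \co H_*(M_\pm) \to H_*(M)$ is split surjective.

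With $j_-$ already surjective, the Mayer--Vietoris connecting map $H_n(M) \to H_{n-1}(F_0)$ vanishes, so we obtain short exact sequences
$$0 \to H_n(F_0) \to H_n(M_-) \oplus H_n(M_+) \xrightarrow{j_- - j_+} H_n(M) \to 0.$$
The factorizations $F_\pm \hookrightarrow M_\pm \to M$ give canonical splittings $H_n(M_\pm) \cong H_n(M) \oplus \ker j_\pm$, and a direct computation of $\ker(j_- - j_+)$ under these splittings produces the isomorphism
$$H_n(F_0) \cong H_n(M) \oplus \ker j_- \oplus \ker j_+.$$
Because $F_0 \cong F_-$ and $M$ is a homology product, $H_n(F_0) \cong H_n(F_-) \cong H_n(M)$, and cancellation for finitely generated abelian groups forces $\ker j_\pm = 0$. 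Thus $j_\pm$ are isomorphisms, and combining with the factorization shows that $H_*(F_\pm) \to H_*(M_\pm)$ is an isomorphism; equivalently, $H_*(M_-, F_-) = 0$ and $H_*(M_+, F_+) = 0$.

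To pick up the other boundary component of each half, I would invoke Poincaré--Lefschetz duality on the orientable compact $3$--manifolds $M_\pm$: the relation $H_k(M_-, F_-) \cong H^{3-k}(M_-, F_0)$, together with the already-established vanishing on the left and universal coefficients for finitely generated groups, yields $H_*(M_-, F_0) = 0$, and similarly for $M_+$. Hence both $M_\pm$ are homology products.

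The only point requiring any care is the cancellation ``$A \oplus B \cong A \Rightarrow B = 0$'', which needs finite generation of the abelian groups involved; this is automatic in our compact setting. Beyond that, everything is Mayer--Vietoris bookkeeping and I expect no serious obstacle.
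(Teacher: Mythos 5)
Your argument is correct, and its skeleton is the same as the paper's --- a Mayer--Vietoris analysis of $M=M_-\cup_{F_0}M_+$ to show that $H_*(M_\pm)\to H_*(M)$ is an isomorphism, followed by Poincar\'e--Lefschetz duality plus universal coefficients to pick up the second boundary component --- but the execution differs in genuine ways. The paper works only in low degrees: it gets surjectivity of $H_1(M_\pm)\to H_1(M)$ from the exact sequence of the triple $(M,M_\pm,F_\pm)$ and of $H_2(M_\pm)\to H_2(M)$ from the fact that $H_2(M)\cong\mathbb Z$ is generated by $[F_\pm]$, then counts ranks in the short exact sequence $0\to H_1(F_0)\to H_1(M_-)\oplus H_1(M_+)\to H_1(M)\to 0$ to conclude $H_1(M_\pm)\cong\mathbb Z^{2g}$, so the degree-one surjections are isomorphisms; it then deduces $H_2(M,M_\pm)=0$, obtains $H_*(M_-,F_0)\cong H_*(M,M_+)=0$ by excision, and only afterwards applies duality and universal coefficients to get $H_*(M_-,F_-)=0$. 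You instead observe that $j_\pm$ is split surjective in every degree (since the isomorphism $H_*(F_\pm)\to H_*(M)$ factors through $H_*(M_\pm)$), read off $H_n(F_0)\cong H_n(M)\oplus\ker j_-\oplus\ker j_+$ from the Mayer--Vietoris short exact sequence, and kill the kernels by cancellation for finitely generated abelian groups; this is degree-uniform, avoids the genus/rank computation and the excision step, and reverses the order of the two conclusions (you obtain $H_*(M_\pm,F_\pm)=0$ first and then $H_*(M_\pm,F_0)=0$ by duality, while the paper goes the other way). Both routes use compactness (finite generation) and orientability of $M_\pm$ in the duality-plus-universal-coefficients step, so neither is more general there; what your version buys is a cleaner homological-algebra mechanism in place of the paper's explicit $\mathbb Z^{2g}$ bookkeeping.
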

\begin{proof}
From the exact sequence
$$\cdots\to H_1(M,F_-)\to H_1(M,M_-)\to H_0(M_-,F_-)\to\cdots$$
we conclude that $H_1(M,M_-)=0$, thus the inclusion map
$H_1(M_-)\to H_1(M)$ is surjective. Similarly, $H_1(M_+)\to
H_1(M)$ is surjective.

Let $g$ be the genus of $F_-$. Since $M$ is a homology product,
 $H_2(M)\cong \mathbb Z$ is generated
by $[F_-]=[F_+]$. So the maps $H_2(M_{\pm})\to H_2(M)$ are
surjective. We then have the short exact sequence
$$
0\to H_1(F_0)\to H_1(M_-)\oplus H_1(M_+)\to H_1(M)\to0.
$$
Both $H_1(M_-)$ and $H_1(M_+)$ surject onto $H_1(M)\cong
H_1(F_-)\cong \mathbb Z^{2g}\cong H_1(F_0)$, so $H_1(M_-)\cong
H_1(M_+)\cong\mathbb Z^{2g}$. It follows that the surjective maps
$H_1(M_{\pm})\to H_1(M)$ are actually isomorphisms. So we have the
exact sequences
$$
0\to H_2(M_{\pm})\to H_2(M)\to H_2(M,M_{\pm})\to0.
$$
We already know that the maps $H_2(M_{\pm})\to H_2(M)$ are
surjective, hence $H_2(M,M_{\pm})\cong0$.

Now we have $H_*(M_-,F_0)\cong H_*(M,M_+)\cong0$. The equality
$H_*(M_-,F_-)\cong0$ then follows from Poincar\'e duality and the
Universal Coefficients Theorem. Hence $M_-$ is a homology product,
and so is $M_+$ by the same argument.
\end{proof}

The next lemma is well-known, proofs of it can be found in
\cite{Mey,Sch}.

\begin{lem}\label{Lem:SCC}
A homology class on a closed, orientable surface is represented by
a simple closed curve if and only if it is primitive.\qed
\end{lem}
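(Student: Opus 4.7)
The plan is to handle the two implications separately using classical surface topology.

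For the forward direction, let $\gamma$ be a simple closed curve on a closed orientable surface $\Sigma$ of genus $g$. If $[\gamma]=0$ the statement is trivial under the convention that $0$ is represented by a null-homotopic circle (and one restricts the notion of primitive to nonzero classes). Otherwise $\gamma$ is non-separating, and I would construct a dual simple closed curve $\gamma'\subset\Sigma$ with algebraic intersection number $\gamma\cdot\gamma'=\pm 1$ by cutting $\Sigma$ open along $\gamma$, choosing a properly embedded arc joining the two resulting boundary circles, and reassembling to view this arc as a closed curve in $\Sigma$ meeting $\gamma$ transversely in one point. If $[\gamma]=nv$ for some $v\in H_1(\Sigma;\mathbb{Z})$ and $n\in\mathbb{Z}$, then $\pm 1=\gamma\cdot\gamma'=n(v\cdot[\gamma'])$, forcing $n=\pm 1$ and so $[\gamma]$ is primitive.

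For the reverse direction, I would reduce to a model case via the mapping class group action on homology. Fix a standard non-separating simple closed curve $\gamma_0\subset\Sigma$ representing a symplectic basis element $a_1\in H_1(\Sigma;\mathbb{Z})\cong\mathbb{Z}^{2g}$. The intersection pairing is a non-degenerate symplectic form on $H_1(\Sigma;\mathbb{Z})$, and I would combine two classical facts: the natural representation $\mathrm{Mod}(\Sigma)\to\mathrm{Sp}(2g,\mathbb{Z})$ is surjective, and $\mathrm{Sp}(2g,\mathbb{Z})$ acts transitively on primitive vectors in $\mathbb{Z}^{2g}$. Given a primitive class $v$, there is therefore a diffeomorphism $\phi$ of $\Sigma$ with $\phi_*([\gamma_0])=v$, and $\phi(\gamma_0)$ is the desired simple closed curve.

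I expect the main subtlety to be transitivity of the $\mathrm{Sp}(2g,\mathbb{Z})$-action on primitive vectors; this in turn amounts to showing that every primitive vector $v$ can be completed to a symplectic basis, which follows by elementary $\mathbb{Z}$-linear algebra once a dual vector pairing to $1$ with $v$ is exhibited (using non-degeneracy of the intersection form together with primitivity of $v$) and the remaining basis is built inductively on the symplectic orthogonal complement. The surjectivity of $\mathrm{Mod}(\Sigma)\to\mathrm{Sp}(2g,\mathbb{Z})$ can safely be invoked as a classical black box.
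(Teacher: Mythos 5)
Your proof is correct, but note that the paper itself offers no proof of this lemma: it is stated as a well-known fact, with the argument deferred to the references \cite{Mey,Sch} (hence the \qed attached to the statement). Your argument is the standard mapping-class-group proof: the ``only if'' direction via an algebraically dual curve produced by cutting along $\gamma$ and closing up an arc joining the two boundary circles, and the ``if'' direction by combining surjectivity of the representation $\mathrm{Mod}(\Sigma)\to\mathrm{Sp}(2g,\mathbb{Z})$ with transitivity of $\mathrm{Sp}(2g,\mathbb{Z})$ on primitive vectors; your reduction of the latter to completing a primitive vector to a symplectic basis, using unimodularity of the intersection form to find a vector pairing to $1$ with $v$ and inducting on the symplectic complement, is exactly right. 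The cited papers of Meyerson and Schafer give more elementary, hands-on constructions of representatives, whereas your route is shorter and more conceptual at the cost of invoking the (classical but nontrivial) surjectivity onto the symplectic group; either is a complete proof. Your caveat about the zero class is also the correct reading of the statement: as written, the trivial class is represented by a (separating or null-homotopic) simple closed curve without being primitive, so the lemma should be understood for nonzero classes; in the paper it is only applied in the direction ``primitive $\Rightarrow$ represented by a simple closed curve'' (in the proof of Theorem~\ref{rank>1}), so this convention causes no harm.
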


\begin{defn}
Suppose $S$ is a compact surface. The {\it norm} of $S$ is defined
by the formula:
$$x(S)=\sum_{S_i} \max\{0,-\chi(S_i)\},$$
where $S_i$ runs over all the components of $S$.
\end{defn}

The next theorem is an analogue of \cite[Theorem~6.2$'$]{NiCorr}.
Here we just sketch the proof, and refer the readers to
\cite[Section 6]{Ni2} and \cite{NiCorr} for more details.

\begin{thm}\label{rank>1}
Suppose $Y$ is a closed irreducible $3$--manifold, $F\subset Y$ is
a closed connected surface of genus $g\ge2$. Suppose
$\{F_1=F,F_2,\dots,F_n\}$ is a maximal collection of mutually
disjoint, nonparallel, genus $g$ surfaces in the homology class of
[F]. Cut open $Y$ along $F_1,\dots,F_n$, we get $n$ compact
manifolds $M_1,\dots,M_n$, $\partial M_k=F_k\cup F_{k+1}$, where
$F_{n+1}=F_1$. Let $\mathcal E_k$ be the subgroup of $H_1(M_k)$
spanned by the first homologies of the product annuli in $M_k$.

If $HF^+(Y,[F],g-1)\cong\mathbb Z$, then for each $k$, $\mathcal
E_k=H_1(M_k)$.
\end{thm}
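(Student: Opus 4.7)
The plan is to argue by contradiction: assume that $\mathcal{E}_k\subsetneq H_1(M_k)$ for some $k$, and produce two $\mathbb{Z}$-linearly independent classes inside $HF^+(Y,[F],g-1)$, contradicting the rank-one hypothesis. First apply Proposition~\ref{HomoProd} to $F=F_1$ to conclude that cutting $Y$ open along $F_1$ yields a homology product, and then iterate Lemma~\ref{Lem:SplitProd} across the other surfaces $F_2,\ldots,F_n$ (each splits the previous cobordism into two pieces whose boundaries both have genus $g$, so Lemma~\ref{Lem:SplitProd} applies provided the intermediate surface is non-separating in its cobordism; this is guaranteed because it is homologous to the norm-minimizing $F$ and by Lemma~\ref{Lem:SCC} can be taken connected of genus $g$). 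This shows that every individual $M_k$ is a homology product with $\partial M_k=F_k\cup F_{k+1}$. The maximality of $\{F_1,\dots,F_n\}$ prevents any $M_k$ from containing a surface parallel to $\partial M_k$, so in particular no $M_k$ is itself a product.

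Now suppose $\mathcal{E}_k$ is proper in $H_1(M_k)$ for some $k$. View $(M_k,\gamma_k)$ as a taut sutured manifold with $R_-(\gamma_k)=F_k$ and $R_+(\gamma_k)=F_{k+1}$; tautness follows from $g\geq 2$, irreducibility of $Y$, and the homology-product structure. Ingredient (I) of the introduction, executed in \cite[Section 6]{Ni2} with the correction \cite{NiCorr} (building on Gabai \cite{G3}), says that when the product annulus classes fail to span $H_1(M_k)$ the characteristic product pair of $(M_k,\gamma_k)$ has nontrivial guts, and under this condition one can construct two taut foliations $\mathcal{F}_0,\mathcal{F}_1$ of $M_k$ which have $R_\pm(\gamma_k)$ as leaves, agree on a neighbourhood of $\partial M_k$, but are \emph{not} isotopic rel boundary — they differ in a way detected by a class in $H_1(M_k)\setminus\mathcal{E}_k$. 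Extending each $\mathcal{F}_i$ by the product foliation across the remaining homology-product pieces $M_j$ ($j\ne k$) yields two distinct taut foliations $\widetilde{\mathcal{F}}_0,\widetilde{\mathcal{F}}_1$ of $Y$, both having $F$ as a leaf.

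Ingredient (II) (Ghiggini \cite{Gh} combined with the Ozsv\'ath--Szab\'o theorem relating taut foliations to contact invariants) then guarantees that the contact invariants $c^+(\widetilde{\mathcal{F}}_0)$ and $c^+(\widetilde{\mathcal{F}}_1)$ are nonzero; because $F$ is a genus-$g$ leaf of each foliation they lie in the summand on which $\langle c_1(\mathfrak{s}),[F]\rangle=2g-2$, which via the conjugation symmetry of $HF^+$ has the same rank as the given group $HF^+(Y,[F],g-1)\cong\mathbb{Z}$. The main obstacle, and the heart of the argument, is strengthening ``two nonzero, distinct classes'' to ``two $\mathbb{Z}$-linearly independent classes''; I would handle this exactly as in \cite[Section 6]{Ni2} with the revision \cite{NiCorr}, by arranging the foliations so that their holonomy data pair differently with a class in $H_1(M_k)\setminus\mathcal{E}_k$, which obstructs any nontrivial $\mathbb{Z}$-linear relation between the contact invariants. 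Having two linearly independent elements in a rank-one group is impossible; this contradiction forces $\mathcal{E}_k=H_1(M_k)$ for every $k$.
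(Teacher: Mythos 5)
Your overall architecture is the same as the paper's (contradiction; homology products via Proposition~\ref{HomoProd} and Lemma~\ref{Lem:SplitProd}; two taut foliations from ingredient (I); nonvanishing contact classes from ingredient (II)), but two of your steps would fail as stated. First, you cannot ``extend by the product foliation across the remaining homology-product pieces $M_j$'': a homology product need not be a product, so those pieces carry no product foliation (if they all did and $M_k$ fibred, $Y$ would already fibre and there would be nothing to prove). What the paper does instead is pass to the re-glued manifold $Z$, chosen so that $Z$ has the homology of $F\times S^1$ (harmless by Lemma~\ref{CutReglue}), and use Gabai's construction \cite{G1} to build a taut foliation of $Z-\mathrm{int}(M_k)$ having $F_k,F_{k+1}$ as compact leaves, which is then glued to the two foliations of $M_k$. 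Staying in $Y$ itself is also problematic later, because the homological control ($H_2(Z)\cong H_1(F)\oplus H_2(F)$, so surfaces disjoint from $F_k$ are multiples of $[F]$, and a closed surface $R$ meeting $F_k$ exactly in $\omega$ exists) is what $Z$ is introduced for.

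Second, you black-box exactly the part where the hypotheses do their work. The paper chooses a simple closed curve $\omega\subset F_k$ with $[\omega]\notin\mathcal E_k$ (Lemma~\ref{Lem:SCC}), introduces the relative surface classes $\mathcal S_m(\pm\omega)$, and proves the norm inequality (\ref{KeyFact}); its cut-and-paste proof is precisely where the maximality of $\{F_1,\dots,F_n\}$ enters (the closed pieces obtained must be parallel to $F_k$ or $F_{k+1}$, producing product annuli that carry $[\omega]$, contradicting $[\omega]\notin\mathcal E_k$), together with the norm-minimality of $F$, which itself comes from $HF^+\ne0$. Your only appeal to maximality (``no $M_k$ is a product'') is not where it is needed, and without (\ref{KeyFact}) there is no reason the two foliations obtained from taut decompositions along $S_1\in\mathcal S_m(+\omega)$ and $S_2\in\mathcal S_m(-\omega)$ can be ``arranged'' to be distinguishable. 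Moreover the distinguishing invariant is not holonomy and not a pairing with a class in $H_1(M_k)$ (which makes no sense for $c_1$): it is $\langle c_1(\mathscr F_1),[R]\rangle\ne\langle c_1(\mathscr F_2),[R]\rangle$ for the closed surface $R\subset Z$ above. Once this is known, the linear independence you single out as the main obstacle is automatic: by \cite[Theorem 3.8]{Gh} the two contact classes are nonzero and lie in distinct $\mathrm{Spin}^c$ summands of $HF^+(\cdot,[F],g-1)$, so the rank is at least $2$. So the proposal identifies the right skeleton but misses the key inequality (\ref{KeyFact}), misuses the maximality hypothesis, and relies on a foliation-extension step that is false for homology products.
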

\begin{proof}[Sketch of proof]
By Proposition~\ref{HomoProd} $M$ is a homology product.
Lemma~\ref{Lem:SplitProd} implies that each $M_k$ is also a
homology product.

Assume that $\mathcal E_k\ne H_1(M_k)$. By Lemma~\ref{Lem:SCC}, we
can find a simple closed curve $\omega\subset F_k$, such that
$[\omega]\notin\mathcal E_k$. Let $\omega_-=\omega\subset F_k$.
Since $M_k$ is a homology product, by Lemma~\ref{Lem:SCC} there is
a simple closed curve $\omega_+\subset F_{k+1}$ which is
homologous to $\omega$ in $M_k$. We fix an arc $\sigma\subset M_k$
connecting $F_k$ to $F_{k+1}$. Let $\mathcal S_m(+\omega)$ be the
set of properly embedded surfaces $S\subset M_k$, such that
$\partial S=\omega_-\sqcup(-\omega_+)$, and the algebraic
intersection number of $S$ with $\sigma$ is $m$. Here $-\omega_+$
denotes the curve $\omega_+$, but with opposite orientation.
Similarly, let $\mathcal S_m(-\omega)$ be the set of properly
embedded surfaces $S\subset M_k$, such that $\partial
S=(-\omega_-)\sqcup\omega_+$, and the algebraic intersection
number of $S$ with $\sigma$ is $m$. Since $M_k$ is a homology
product, $\mathcal S_m(\pm\omega)\ne\emptyset$. Let $x(\mathcal
S_m(\pm\omega))$ be the minimal value of $x(S)$ for all
$S\in\mathcal S_m(\pm\omega)$, where $x(S)$ is the norm of $S$.

\noindent{\bf Claim.} For positive integers $p,q$,
\begin{equation}\label{KeyFact} x(\mathcal S_p(+\omega))+x(\mathcal
S_q(-\omega))>(p+q)x(F_k). \end{equation}
\begin{proof}[Proof of
Claim.] Suppose $S_1\in\mathcal S_p(+\omega),S_2\in\mathcal
S_q(-\omega)$. Isotope $S_1,S_2$ so that they are transverse, then
perform oriented cut-and-paste to $S_1,S_2$, we get a closed
surface $P\subset \mathrm{int}(M_k)$, with $x(P)=x(S_1)+x(S_2)$.
Using standard arguments in 3--dimensional topology, we can assume
$P$ has no component which is a sphere or torus.

Since $M$ is a homology product, one can glue the two boundary
components of $M$ together to get a manifold $Z$, which has the
same homology as $F\times S^1$, so $H_2(Z)\cong H_1(F)\oplus
H_2(F)$. Thus if a closed surface $H\subset Z$ is disjoint from
one $F_k$, then $H$ must be homologous to a multiple of $F$.

$P$ is homologous to $(p+q)F_k$ in $Z$; in fact, as shown in
\cite{Ni2}, $P$ is the disjoint union of $p+q$ surfaces
$P_1,\dots,P_{p+q}$, where each $P_i$ is homologous to $F_k$.
Since $HF^+(Z,[F],g-1)\ne0$, $F$ is Thurston norm minimizing in
$Z$. So we have $x(P_i)\ge x(F)=x(F_k)$. So if $x(P)\le
(p+q)x(F_k)$, then the equality holds, and each $P_i$ has
$x(P_i)=x(F_k)$.

Next we claim that $P_i$ has only one component. Otherwise,
suppose $P_i=Q_1\sqcup Q_2$, then
$$x(Q_1),x(Q_2)<x(P_i)=x(F_k).$$ $[Q_1],[Q_2]$ are multiples of
$[F_k]$ in $H_2(Z)$. Since $F_k$ is Thurston norm minimizing in
$Z$ and $[F_k]$ is primitive, we must have $[Q_1]=[Q_2]=0$, which
is impossible.

Since $\{F_1,\dots,F_n\}$ is a maximal collection of disjoint,
nonparallel, genus $g$ surfaces, each $P_i$ is parallel to either
$F_k$ or $F_{k+1}$. Thus there exists $r\in\{0,1,\dots,p+q\}$,
such that $P_1,\dots,P_r$ are parallel to $F_k$,
$P_{r+1},\dots,P_{p+q}$ are parallel to $F_{k+1}$. Let
$C_r=P_r\cap S_1$. Then $C_r\times I$ is a collection of annuli
which connect $P_r$ to $P_{r+1}$, while $C_r$ is homologous to
$\omega$. This contradicts the assumption that
$[\omega]\notin\mathcal E_k$. Now the proof of the claim is
finished.\end{proof}

As shown in \cite[Lemma 6.4]{Ni2}, when $m$ is sufficiently large,
there exist connected surfaces $S_1\in\mathcal S_m(+\omega)$ and
$S_2\in\mathcal S_m(-\omega)$, such that they give taut
decompositions of $M_k$. By the work of Gabai \cite{G1}, one can
construct two taut smooth foliations $\mathscr F'_1,\mathscr F'_2$
of $M_k$, such that $F_k,F_{k+1}$ are leaves of the two
foliations; one can also construct a taut smooth foliation
$\mathscr F$ of $Z-\mathrm{int}(M_k)$ with compact leaves
$F_k,F_{k+1}$. Let $\mathscr F_i=\mathscr F'_i\cup\mathscr F$ be a
foliation of $Z$. Let $R$ be a connected surface in $Z$, whose
intersection with $F_k$ is $\omega$. As in \cite{Gh} or
\cite{Ni2}, using (\ref{KeyFact}), one can prove that $$\langle
c_1(\mathscr F_1),[R]\rangle\ne\langle c_1(\mathscr F_2),
[R]\rangle.$$ Thus \cite[Theorem 3.8]{Gh} can be applied to show
that $\mathrm{rank}(HF^+(Y,[F],g-1))>1$, a contradiction. This
finishes the proof of Theorem~\ref{rank>1}.
\end{proof}

\begin{cor}\label{CharSurj}
Suppose $(\Pi_k,\Psi_k)$ is the characteristic product pair
\cite[Definition 6]{NiCorr} for $M_k$, then the map
$$i_*\co H_1(\Pi_k)\to H_1(M_k)$$
is surjective.
\end{cor}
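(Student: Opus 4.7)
The plan is to combine Theorem~\ref{rank>1} with the defining property of the characteristic product pair. From Proposition~\ref{HomoProd} together with Lemma~\ref{Lem:SplitProd} we already know that each $M_k$ is a homology product, and Theorem~\ref{rank>1} strengthens this by asserting that $\mathcal E_k = H_1(M_k)$, that is, $H_1(M_k)$ is generated by the homology classes carried by essential product annuli in $M_k$. So to prove surjectivity it suffices to show that for every essential product annulus $A\subset M_k$, the class $[A]\in H_1(M_k)$ lies in the image of $i_*\co H_1(\Pi_k)\to H_1(M_k)$.

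The next step is to invoke the universal property of $(\Pi_k,\Psi_k)$ from \cite[Definition 6]{NiCorr}: $\Pi_k$ is characterized (up to ambient isotopy) as the maximal $I$--bundle sub-pair of $M_k$ such that every essential product annulus and product disk of $M_k$ can be properly isotoped into $\Pi_k$. Thus, given an essential product annulus $A\subset M_k$, after an ambient isotopy we may assume $A\subset \Pi_k$. Its homology class $[A]$ is then the image under $i_*$ of the class represented by $A$ in $H_1(\Pi_k)$.

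Combining the two observations, $i_*(H_1(\Pi_k))\supseteq \mathcal E_k$, and since $\mathcal E_k=H_1(M_k)$ by Theorem~\ref{rank>1}, we conclude that $i_*$ is surjective.

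The main point to be careful about is the precise formulation of the characteristic product pair from \cite{NiCorr}: one should verify that every essential product annulus, not merely those in some restricted class, is isotopic into $\Pi_k$, so that all generators of $\mathcal E_k$ are accounted for. Once this is checked, the corollary is an immediate consequence of Theorem~\ref{rank>1}; no further analytic input is needed.
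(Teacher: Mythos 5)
Your argument is correct and coincides with the paper's proof, which simply invokes \cite[Corollary 7]{NiCorr}: there, exactly as you describe, surjectivity follows from the fact that $H_1(M_k)$ is generated by classes of product annuli (Theorem~\ref{rank>1}, i.e.\ $\mathcal E_k=H_1(M_k)$) together with the defining property of the characteristic product pair that every essential product annulus can be isotoped into $\Pi_k$ (inessential ones contribute trivially to homology). So your proposal is essentially the same proof, just written out rather than cited.
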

\begin{proof}
The proof is the same as \cite[Corollary 7]{NiCorr}.
\end{proof}

\begin{lem}\label{PuncTori}
Notation is as above, then each $\Pi_k$ contains a product
manifold $G_k\times I$, where $G_k$ is a once-punctured torus.
\end{lem}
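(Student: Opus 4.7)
The plan is to read off the structure of the characteristic product pair $\Pi_k$ as an $I$-bundle, and then use the surjection $i_*\colon H_1(\Pi_k) \to H_1(M_k)$ from Corollary~\ref{CharSurj}, combined with the symplectic intersection form on $H_1(F_k)$, to force some component of the base of $\Pi_k$ to have positive genus.

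First I would unpack the definition of the characteristic product pair from \cite{NiCorr}: in the present setting, where $\partial M_k = F_k \sqcup F_{k+1}$ and $M_k$ is a homology product (by Proposition~\ref{HomoProd} together with iterated applications of Lemma~\ref{Lem:SplitProd}), the relevant components of $\Pi_k$ take the form $G_i \times I$ with $G_i \times \{0\} \subset F_k$ and $G_i \times \{1\} \subset F_{k+1}$, the $G_i$ being disjoint compact subsurfaces of $F_k$. Then $H_1(\Pi_k) = \bigoplus_i H_1(G_i)$. Since $M_k$ is a homology product, the inclusion induces an isomorphism $H_1(F_k) \xrightarrow{\cong} H_1(M_k) \cong \mathbb{Z}^{2g}$, and the map $H_1(\Pi_k) \to H_1(M_k)$ identifies with the inclusion-induced map $H_1\bigl(\bigsqcup_i G_i\bigr) \to H_1(F_k)$. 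By Corollary~\ref{CharSurj}, this map is surjective onto $\mathbb{Z}^{2g}$.

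The key step is then a symplectic linear-algebra observation. If every $G_i$ had genus $0$, then each $H_1(G_i)$ would be generated by the classes of its boundary components, and the totality of these boundary curves (over all $i$) forms a disjoint collection of simple closed curves on $F_k$. Their homology classes in $H_1(F_k;\mathbb{Q})$ therefore pairwise have intersection number zero, so their span is isotropic with respect to the symplectic intersection form. But a maximal isotropic subspace has dimension $g$, which is strictly less than $2g$ for $g \ge 2$, contradicting the surjectivity above. Hence some component $G_{i_0}$ has genus $\ge 1$, and inside it I can carve out a once-punctured torus $G_k$, namely a compact genus-$1$ subsurface bounded by a single simple closed curve in the interior of $G_{i_0}$. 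The restricted $I$-bundle structure then gives $G_k \times I \subset G_{i_0} \times I \subset \Pi_k$, which is the desired product submanifold.

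The main obstacle I expect is the initial structural step: rigorously justifying that the relevant components of $\Pi_k$ really take the product form spanning from $F_k$ to $F_{k+1}$. One must check, via the definition of the characteristic product pair in \cite{NiCorr}, that components with both horizontal boundary components on the same side of $\partial M_k$, or twisted $I$-bundle components over non-orientable bases, either do not occur in this product-like setting or contribute only classes coming from pairwise disjoint simple closed curves on $\partial M_k$ (for twisted bundles, by passing to the orientation double cover); either way they feed into the same isotropy count and do not affect the conclusion. Once the structural picture is secured, the symplectic argument and the extraction of a once-punctured torus subsurface are essentially formal.
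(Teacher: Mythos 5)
Your argument is correct, but it runs differently from the paper's. The paper also starts from Corollary~\ref{CharSurj}, but instead of a rank count it sets $E_k=\Pi_k\cap F_k$, $E_k^c=\overline{F_k-E_k}$, and builds the bipartite adjacency graph $\Gamma$ whose vertices are the components of $E_k$ and $E_k^c$ and whose edges are the curves of $E_k\cap E_k^c$: surjectivity plus the intersection pairing shows no such curve is non-separating in $F_k$ (a dual curve $c$ would have $[c]$ outside the image of $H_1(\Pi_k)$), so $\Gamma$ is a tree; a leaf of the tree is a component $H$ with a single boundary curve, $H$ is not a disk, hence contains a once-punctured torus, and the pairing argument is used once more to see that $H$ lies in $E_k$ rather than $E_k^c$. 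Your proof replaces all of this with one global symplectic count: if every component of $E_k$ were planar, the image of $H_1(\Pi_k)$ in $H_1(M_k)\cong H_1(F_k)\cong\mathbb{Z}^{2g}$ would be spanned by classes of disjoint simple closed curves, hence isotropic of rank at most $g<2g$, contradicting Corollary~\ref{CharSurj}; this is shorter and locates the genus directly in a component of $E_k$, so you never need the tree or the final ``which side'' step, though the paper's route yields the extra combinatorial picture of how $E_k$ and its complement fit together. Your flagged obstacle is in fact vacuous: by \cite[Definition 6]{NiCorr} the characteristic product pair is a genuine product $(E\times I,\partial E\times I)$ whose vertical annuli are product annuli joining $F_k$ to $F_{k+1}$, so there are no twisted $I$-bundle pieces and no components with both horizontal boundaries on one side (and your fallback claim that such pieces would only contribute classes of disjoint simple closed curves would not be right for a positive-genus base, so it is good that it is not needed). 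The identification of the image of $H_1(\Pi_k)$ with the image of $H_1(E_k)\to H_1(F_k)$ via the homology-product isomorphism, and the extraction of a once-punctured torus from a positive-genus component, are correct as you state them.
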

\begin{proof} This is an easy consequence of Corollary~\ref{CharSurj}.  In
fact, let $E_k=\Pi_k\cap F_k$, $E^c_k=\overline{F_k-E_k}$, we can
construct a graph $\Gamma$ as follows. The vertices of $\Gamma$
correspond to the components of $E_k$ and $E^c_k$. $E_k\cap E^c_k$
consists of simple closed curves. For each of such curves, we draw
an edge connecting the two components of $E_k$ and $E^c_k$ that
are adjacent along the curve. No component of $E_k\cap E^c_k$ is a
non-separating curve in $F_k$, otherwise there would be a closed
curve $c\subset F_k$ which intersects the component exactly once,
thus $[c]\notin H_1(\Pi_k)$. It then follows that the $\Gamma$
contains no loop, so $\Gamma$ is a tree.

Consider a root of the tree, it corresponds to a component $H$ of
$E_k$ or $E^c_k$. $H$ has only one boundary component since it
corresponds to a root. $H$ is not a disk, so it contains a
once-punctured torus $G_k$. Since $H_1(G_k)$ contributes to
$H_1(M_k)$ nontrivially, $H$ must be a component of $E_k$.
\end{proof}

\section{Proof of the main theorem}

In this section, we will use Heegaard Floer homology with twisted
coefficients to prove Theorem~\ref{ClosedFibre}.

\begin{lem}\label{Lem:0surgery}
Suppose $Z$ is a closed $3$--manifold containing a non-separating
two-sphere $S$, $K\subset Z$ is a null-homologous knot, $H$ is a
genus $g$($>0$) Seifert surface for $K$. Let $Z_0(K)$ be the
manifold obtained by doing $0$--surgery on $K$, $\widehat{H}$ be
the extension of $H$ in $Z_0(K)$. Let $\omega\subset Z-K$ be a
closed curve such that $\omega\cdot S\ne0$. We then have
$$\underline{\widehat{HFK}}(Z,K,\omega,[H],g;\mathcal L)\cong \underline{HF}^+(Z_0(K),\omega,[\widehat H],g-1;\mathcal L).$$
\end{lem}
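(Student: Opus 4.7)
The plan is to use the Ozsv\'ath--Szab\'o identification between $\underline{HF}^+$ of the $0$--surgery and a certain subquotient of the twisted knot Floer complex, combined with the vanishing $\underline{HF}^+(Z, \omega; \mathcal L) = 0$ supplied by Lemma \ref{Lem:TrivialS2}. Write $C := \underline{CFK}^\infty(Z, K, \omega; \mathcal L)$ for the twisted knot Floer complex, bi-filtered by $(i, j)$ so that $j - i$ equals the Alexander grading of each generator. First I would establish the twisted analogs of the identifications from \cite{OSzGenus}:
$$\underline{HF}^+(Z, \omega; \mathcal L) = H_*(C\{i \geq 0\}), \qquad \underline{HF}^+(Z_0(K), \omega, [\widehat H], g-1; \mathcal L) = H_*\bigl(C\{\max(i, j - (g-1)) \geq 0\}\bigr).$$

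Next I would consider the short exact sequence of chain complexes
$$0 \to C\{i < 0,\, j \geq g-1\} \to C\{\max(i, j-(g-1)) \geq 0\} \to C\{i \geq 0\} \to 0$$
arising from the natural $i$-filtration, and take its long exact sequence in homology. Since $\omega \cdot S \neq 0$ and $S$ is non-separating, Lemma \ref{Lem:TrivialS2} gives $\underline{HF}^+(Z, \omega; \mathcal L) = 0$, and the connecting homomorphism then yields
$$\underline{HF}^+(Z_0(K), \omega, [\widehat H], g-1; \mathcal L) \cong H_*(C\{i < 0,\, j \geq g-1\}).$$

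The remaining task is to identify the right-hand side with $\underline{\widehat{HFK}}(Z, K, \omega, [H], g; \mathcal L)$. The Seifert genus adjunction gives $\underline{\widehat{HFK}}(Z, K, \omega, s; \mathcal L) = 0$ for $s > g$; via the $U$-action this means $H_*(C\{i = n, j = t\}) = 0$ whenever $t - n > g$. In the region $\{i < 0,\, j \geq g - 1\}$ every position has $j - i \geq g$, with equality only at $(i, j) = (-1, g - 1)$. I would then filter by $i$ and, within each $i$-layer, by $j$; all terms on the $E_1$ page of the resulting spectral sequence vanish except at $(-1, g-1)$, so the sequence collapses to
$$H_*(C\{i < 0,\, j \geq g-1\}) \cong H_*(C\{i = -1,\, j = g-1\}) \cong H_*(C\{i = 0,\, j = g\}) = \underline{\widehat{HFK}}(Z, K, \omega, [H], g; \mathcal L),$$
the final isomorphism being the $U$-shift $(i, j) \mapsto (i-1, j-1)$.

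The main obstacle is the preliminary step: establishing the twisted version of the Ozsv\'ath--Szab\'o identification of $\underline{HF}^+(Z_0(K), \omega, [\widehat H], g-1; \mathcal L)$ with $H_*\bigl(C\{\max(i, j-(g-1)) \geq 0\}\bigr)$. This requires carefully tracking the curve $\omega$ through the Heegaard triple diagram comparing $(Z, K)$ with $Z_0(K)$, and checking that the area functions defining the twisting coefficients match on both sides. This is expected to go through within the framework of \cite{OSzAnn2, AiP}, but the verification is where the real content of the lemma lies.
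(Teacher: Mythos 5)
Your reduction from $H_*\bigl(C\{\max(i,j-(g-1))\ge 0\}\bigr)$ to the top knot Floer group is fine: the short exact sequence, the use of Lemma~\ref{Lem:TrivialS2} to kill $H_*(C\{i\ge0\})=\underline{HF}^+(Z,\omega;\mathcal L)$, and the adjunction-plus-spectral-sequence collapse onto $C\{i=-1,j=g-1\}$ are all correct (and in effect reprove, with twisted coefficients, the first exact triangle of \cite[Corollary~4.5]{OSzKnot}). The genuine gap is the step you yourself flag as ``the main obstacle'': the identification $\underline{HF}^+(Z_0(K),\omega,[\widehat H],g-1;\mathcal L)\cong H_*\bigl(C\{\max(i,j-(g-1))\ge0\}\bigr)$ is not a twisted analogue of any identification in \cite{OSzGenus,OSzKnot}. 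The isomorphism $HF^+\cong H_*(C\{\max(i,j-s)\ge0\})$ is the \emph{large} surgery formula; for $0$--surgery it is false in general even untwisted. For example, for a genus--$2$ fibred knot in $S^3$ the left side in the Spin$^c$ structure $s=g-1=1$ is finitely generated (it is $\widehat{HFK}$ in the top grading), while $H_*(C\{\max(i,j-1)\ge0\})\cong HF^+(S^3_N(K),[1])$ for large $N$ contains a $\mathbb Z[U^{-1}]$--tower. So there is nothing to ``upgrade'' by tracking $\omega$ through a triple diagram: the holomorphic-triangle argument that proves the large surgery formula breaks down for $0$--surgery, and your plan for the key step would fail as described.

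The statement you need is only true here because of the vanishing $\underline{HF}^+(Z,\omega;\mathcal L)=0$, used a second time: one must pass from $Z_0$ to a large surgery $Z_p$ via the twisted surgery exact triangle (\ref{Eq:TwistExact}) (or the mapping-cone/integer-surgery formula, whose $B^+$--summands are acyclic by Lemma~\ref{Lem:TrivialS2}), obtaining $\underline{HF}^+(Z_0,\omega,[g-1];\mathcal L)\cong\underline{HF}^+(Z_p,\omega,[g-1];\mathcal L)\cong H_*(C\{\max(i,j-(g-1))\ge0\})$. This is exactly what the paper does, and more efficiently: it cites the twisted analogues of both exact triangles of \cite[Corollary~4.5]{OSzKnot} through $\underline{HF}^+(Z_p,\omega,[g-1];\mathcal L)$, notes that both third terms are $\underline{HF}^+(Z,\omega;\mathcal L)=0$, and concludes that both maps are isomorphisms, with no need for the bifiltered complex at all. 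If you repair your first step this way, your remaining argument becomes a (correct but redundant) re-derivation of the first of those two triangles.
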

\begin{proof}
As in
 \cite[Corollary 4.5]{OSzKnot}, when $p$ is sufficiently large, we
have two exact triangles: (we suppress $[H],[\widehat H]$ and
$\mathcal L$ in the notation)
$$\begin{CD}
\cdots\to\underline{\widehat{HFK}}(Z,K,\omega,g)@>\sigma>>
\underline{HF}^+(Z_p,\omega,[g-1]) \to
\underline{HF}^+(Z,\omega)\to\cdots,
\end{CD}
$$
$$\begin{CD}
\cdots\to \underline{HF}^+(Z_0,\omega,[g-1])@>\sigma'>>
\underline{HF}^+(Z_p,\omega,[g-1]) \to
\underline{HF}^+(Z,\omega)\to\cdots.
\end{CD}
$$
By Lemma~\ref{Lem:TrivialS2}, $\underline{HF}^+(Z,\omega)=0$, so
the maps $\sigma,\sigma'$ are isomorphisms, hence our desired
result holds.
\end{proof}

\begin{proof}[Proof of Theorem~\ref{ClosedFibre}]
Notation is as in Section~\ref{CharPPair}. By
Lemma~\ref{PuncTori}, we have the product manifolds $G_k\times
I\subset M_k$. By cut-and-reglue along $F_k$'s, we can get a new
manifold $Y_1$ such that the $G_k\times I$'s are matched together
to form an essential submanifold $G\times S^1$ in $Y_1$, where $G$
is a once-punctured torus in $F$.

Since each $M_k$ is a homology product, we can construct a new
manifold $Y_2$ with $b_1(Y_2)=1$ by cutting $Y_1$ open along $F$
and then regluing by a homeomorphism of $F$.

By Lemma~\ref{CutReglue}, $$HF^+(Y_1,[F],g-1)\cong
HF^+(Y_2,[F],g-1)\cong HF^+(Y,[F],g-1)\cong\mathbb Z.$$

Let $D\subset G$ be a small disk. We remove $D\times S^1$ from
$Y_1$, then glue in a solid torus $V$, such that the meridian of
$V$ is $p\times S^1$ for a point $p\in\partial D$. The new
manifold is denoted by $Z$, and the core of $V$ is a
null-homologous knot $K$ in $Z$. $\check F=F-\mathrm{int}(D)$ is a
Seifert surface for $K$. Conversely, $Y_1$ can be obtained from
$Z$ by $0$--surgery on $K$.

$Z$ contains non-separating spheres. In fact, pick any properly
embedded non-separating arc $c\subset G-\mathrm{int}(D)$, such
that $\partial c\subset\partial D$, then $\partial(c\times S^1)$
bounds two disks in $V$. The union of the two disks and $c\times
S^1$ is a non-separating sphere in $Z$. Suppose $S$ is such a
non-separating sphere. Let $\omega\subset Z-\check F$ be a closed
curve such that $\omega\cdot S\ne0$. The curve $\omega$ can also
be viewed as lying in $Y_1$ and $Y_2$, and $\omega$ is disjoint
from $F$.

Since $b_1(Y_2)=1$ and $\omega\cdot[F]=0$, $\omega$ is
null-homologous in $Y_2$, so
$$\underline{HF}^+(Y_2,\omega,[F],g-1;\mathcal L)\cong HF^+(Y_2,[F],g-1;\mathbb Q)\otimes_{\mathbb Q}\mathcal L\cong\mathcal L.$$
A twisted version of Lemma~\ref{CutReglue} then implies that
$$\underline{HF}^+(Y_1,\omega,[F],g-1;\mathcal L)\cong\mathcal
L.$$

By Lemma~\ref{Lem:0surgery},
$\underline{\widehat{HFK}}(Z,K,\omega,[\check F],g;\mathcal
L)\cong\underline{HF}^+(Y_1,\omega,[F],g-1;\mathcal
L)\cong\mathcal L$. Now by Theorem~\ref{TwistKnotFibre} $K$ is
fibred with fibre $\check F$, hence $Y_1$ is fibred with fibre
$F$, and so is $Y$.
\end{proof}

\end{document}